\definecolor{citegreen}{rgb}{0,0.6,0}
\definecolor{refred}{rgb}{0.8,0,0}
\theoremstyle{plain}
\newtheorem{teo}{Theorem}[section]
\newtheorem{lemma}[teo]{Lemma}
\newtheorem{prop}[teo]{Proposition}
\newtheorem{cor}[teo]{Corollary}
\newtheorem{ackn}{Acknowledgments\!}
\theoremstyle{definition}
\theoremstyle{remark}
\numberwithin{equation}{section}
\def\RR{{\mathbb R}}
\def\RRR{{\mathrm R}}
\def\Ric{{\mathrm {Ric}}}
\def\Scal{{\RRR}}
\def\Rm{{\mathrm {Rm}}}
\def\de{{\partial}}
\def\a{\alpha}
\def\eps{\varepsilon}
\def\tr{\operatornamewithlimits{tr}\nolimits}
\def\dist{\mathrm{dist}}
\title[Rigidity of gradient Einstein shrinkers]{Rigidity of gradient Einstein shrinkers}
\author[Giovanni Catino]{Giovanni Catino}
\address[Giovanni Catino]{Dipartimento di Matematica, Politecnico di Milano, Piazza Leonardo da Vinci 32, 20133 Milano, Italy}
\email[]{giovanni.catino@polimi.it}
\author[Lorenzo Mazzieri]{Lorenzo Mazzieri}
\address[Lorenzo Mazzieri]{Scuola Normale Superiore, P.za Cavalieri 7, 56126 Pisa, Italy}
\email{l.mazzieri@sns.it}
\author[Samuele Mongodi]{Samuele Mongodi}
\address[Samuele Mongodi]{Scuola Normale Superiore, P.za Cavalieri 7, 56126 Pisa, Italy}
\email{s.mongodi@sns.it}
\begin{document}

\begin{abstract} In this paper we consider a perturbation of the Ricci solitons equation proposed in~\cite{jpb1} and studied in~\cite{CaMa} and we classify noncompact gradient shrinkers with bounded nonnegative sectional curvature.
\end{abstract}

\maketitle

\begin{center}

\noindent{\it Key Words: Einstein manifolds, Ricci solitons, Ricci flow}

\medskip

\centerline{\bf AMS subject classification:  53C24, 53C25, 53C44}

\end{center}

\

\section{Introduction and statement of the result}

In recent years much effort has been devoted to the classification of self-similar solutions of geometric flows. Some of the most interesting examples are gradient Ricci solitons . These are Riemannian manifolds satisfying 
$$
\Ric + \nabla^{2}f \,=\, \lambda g \,,
$$
for some $\lambda\in\mathbb{R}$ and some smooth function $f$ defined on $M^{n}$. In particular, if $\lambda>0$, the soliton is called {\em shrinking} and it generates an ancient self-similar solution to the Ricci flow with finite extinction time. In dimension three, a complete classification of gradient Ricci shrinkers was given by Ivey~\cite{ivey1} in the compact case and by Perelman~\cite{perel1}, Ni-Wallach~\cite{nw2} and Cao-Chen-Zhu~\cite{caochenzhu} in the complete case. In higher dimension the situation is much more complicated. In fact, due to the lack of the Hamilton-Ivey pinching estimates, which ensure the nonnegativity of the sectional curvatures in dimension three, there exist examples of ``exotic'' shrinking Ricci solitons in both the compact and the noncompact case (for a general overview on Ricci shrinkers, we refer the reader to~\cite{cao3}). In dimension four, under the assumption of bounded nonnegative curvature operator, the most significant classification result has been obtained by Naber~\cite{na1}, where he proves that any noncompact Ricci shrinker of this type is isometric to $\mathbb{R}^{4}$ or to a finite quotient of either $\mathbb{S}^{2}\times\mathbb{R}^{2}$ or $\mathbb{S}^{3}\times\mathbb{R}$. In higher dimension we would like to mention the following result due to Petersen-Wylie~\cite{pw2}.
\begin{teo}[Petersen-Wylie~\cite{pw2}] 
\label{petwyl}
A complete, noncompact, rectifiable, gradient shrinking Ricci soliton with bounded curvature, nonnegative radial sectional curvature, and nonnegative Ricci curvature is rigid.
\end{teo}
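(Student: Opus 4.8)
The strategy, following Petersen and Wylie, is to reduce rigidity to the single statement ``the scalar curvature $R$ is constant'' and then to split off a Euclidean de~Rham factor. Normalising so that the soliton constant is $\lambda>0$, I would first record the standard identities for a gradient shrinking soliton $\Ric+\nabla^{2}f=\lambda g$: tracing gives $R+\Delta f=n\lambda$; the twice-contracted second Bianchi identity gives $\nabla R=2\,\Ric(\nabla f)$; and contracting the soliton equation with $\nabla f$ together with these yields $R+|\nabla f|^{2}-2\lambda f=\mathrm{const}$, normalised to $R+|\nabla f|^{2}=2\lambda f$ by translating $f$. One also has the drift-Bochner identity $\Delta_{f}R=2\lambda R-2|\Ric|^{2}=2\langle\Ric,\nabla^{2}f\rangle$, where $\Delta_{f}=\Delta-\langle\nabla f,\nabla\,\cdot\,\rangle$, together with $\Delta_{f}f=n\lambda-2\lambda f$.

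Next I would exploit rectifiability, which says $|\nabla f|$ is constant on the level sets of $f$, i.e.\ $|\nabla f|^{2}=\varphi(f)$ for a one-variable function $\varphi$: differentiating in the $\nabla f$ direction shows $\nabla f$ is an eigenvector of $\nabla^{2}f$, hence --- via the soliton equation --- of $\Ric$, and that $R$, $\Delta f$, $|\Ric|^{2}$ are functions of $f$ alone. Writing $\mu(f)=\tfrac12 R'(f)$ for the Ricci eigenvalue of $\nabla f$, the hypothesis $\Ric\ge 0$ forces $\Ric(\nabla f,\nabla f)=\mu\,|\nabla f|^{2}\ge 0$, so $R$ is nondecreasing along the gradient flow, and since the curvature is bounded, $R$ is bounded and hence has a finite limit. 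Moreover, a complete gradient shrinker has finite weighted volume $\int_{M}e^{-f}\,dV<\infty$ and, by the quadratic growth of $f$, the potential is a proper exhaustion attaining its minimum at a point $p$, where $\nabla f=0$ and $\varphi(f(p))=0$.

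The heart of the proof is to promote this to $R\equiv\mathrm{const}$. Combining $\Delta_{f}R=2\lambda R-2|\Ric|^{2}$ with the identity $\Delta_{f}R=R''(f)\,(2\lambda f-R)+R'(f)\,(n\lambda-2\lambda f)$ that comes from rectifiability, one obtains a second-order ODE for $R(f)$ on $[f(p),\infty)$ whose initial data are fixed by the smooth closing-up of the level sets at $p$; one must then show, using $\Ric\ge 0$, the eigenvalue structure of $\Ric$ from rectifiability, the radial sign condition $\sec(\nabla f,\cdot)\ge 0$, and boundedness of curvature, that $R'\equiv 0$. Equivalently, one shows $\Delta_{f}R$ has a sign and invokes a Liouville-type theorem for $\Delta_{f}$-subharmonic (resp.\ superharmonic) functions bounded on one side --- available because $(M,e^{-f}\,dV)$ has finite weighted volume, hence is $f$-parabolic. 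This is the step in which all the hypotheses are used simultaneously, and it is the main obstacle; the remaining steps are formal identities or classical splitting theorems.

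Finally, once $R$ is constant we get $\nabla R=0$, hence $\Ric(\nabla f)=0$; writing $\Ric(\nabla f,\nabla f)=\sum_{i}\sec(\nabla f,E_{i})\,|\nabla f|^{2}$ over an orthonormal frame $\{E_{i}\}$ of $(\nabla f)^{\perp}$, nonnegativity of each summand forces every radial sectional curvature to vanish, so the soliton is radially flat. A complete gradient shrinking soliton that is radially flat with constant scalar curvature is rigid: using the soliton curvature identity together with radial flatness and $\Ric(\nabla f)=0$ one checks that $\Ric$ is parallel, so its eigenspace distributions are parallel and (de~Rham) split $M$ isometrically into a product of Einstein factors; the soliton equation forces $\nabla^{2}f=\lambda g$ on the $0$-eigenspace factor, which by Tashiro's theorem is $\RR^{k}$ with $f=\tfrac{\lambda}{2}|x|^{2}$, while the remaining factor $N^{n-k}$ is Einstein with constant $\lambda>0$. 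Thus $M$ is a (possibly finite quotient of a) product $N^{n-k}\times\RR^{k}$, i.e.\ the soliton is rigid, modulo the ODE/parabolicity analysis flagged above.
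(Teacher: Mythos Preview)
The paper does not prove Theorem~\ref{petwyl}: it is quoted as a result of Petersen and Wylie from~\cite{pw2} and then invoked as a black box at the very end of Section~4, once the $\rho$-Einstein shrinker has been shown to have constant scalar curvature (hence to be an ordinary rectifiable Ricci shrinker satisfying all the hypotheses). There is therefore no in-paper proof to compare your proposal against.

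Taken on its own terms, your outline has the right architecture --- reduce to $R\equiv\mathrm{const}$, deduce radial flatness from $\Ric(\nabla f)=0$ together with $\sec(\nabla f,\cdot)\ge 0$, then split --- but you yourself flag the central step as a gap, and neither of your two suggested mechanisms for it is actually carried out. In particular, the sign of $\Delta_{f}R=2\lambda R-2|\Ric|^{2}=2\langle\Ric,\lambda g-\Ric\rangle$ is not visible from $\Ric\ge 0$ alone; what Petersen--Wylie actually do (and what this paper mirrors in Propositions~\ref{prp_fconvex} and~\ref{rconst} for the $\rho$-deformed case) is first use nonnegative radial sectional curvature to make $\nabla^{2}r$ semidefinite via a Riccati comparison, hence $\nabla^{2}f\ge 0$ for large $|r|$, which then gives $\Ric\le\lambda g$ and so $\Delta_{f}R\ge 0$ \emph{at infinity}. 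The ODE form $R''\ge(f'-C)R'\ge 0$ (from rectifiability and the quadratic growth of $f$) together with boundedness of $R$ forces $R'\equiv 0$ on a half-line, and one propagates this globally. Your weighted-parabolicity alternative would need a maximum principle for $\Delta_{f}$ on an exterior region $\{r\ge r_{0}\}$, not merely finite weighted volume of $M$, so it is not an off-the-shelf substitute. The final splitting (radially flat plus constant $R$ implies rigid) is itself a separate lemma in~\cite{pw2}; your claim that $\Ric$ becomes parallel is the correct endpoint, but deriving it requires more than the one sentence you give.
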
  
To understand the statement, we recall that a soliton is called \emph{rectifiable} if $|\nabla f|$ is constant along the connected components of the regular level sets of $f$ and it is called {\em rigid} if, for some $k\in \{0,\ldots,(n-1) \}$, its universal cover, endowed with the lifted metric and the lifted potential function, is isometric to the Riemannian product $N^{k}\times\mathbb{R}^{n-k}$, where $N^{k}$ is a $k$-dimensional Einstein manifold and $f = \tfrac{\lambda}{2} |x|^{2}$ on the Euclidean factor. We also recall that $g$ has nonnegative radial sectional curvature if $\Rm(E,\nabla f,E,\nabla f)\geq 0$ for every vector field $E$ orthogonal to $\nabla f$. 

In this paper we consider the following perturbation of the Ricci soliton equation 
\begin{equation}\label{eq_soliton}
\Ric + \nabla^2 f=\rho \Scal g + \lambda g \, ,
\end{equation}
where $(M^{n},g)$ is a Riemannian manifold, $\lambda\in\mathbb{R}$, $\rho\in\mathbb{R}\setminus\{0\}$ and $f$ is a smooth function on $M^{n}$ which will be called \emph{potential}. Solutions to this equation are called gradient $\rho$-Einstein solitons and were first considered in~\cite{CaMa}, where various classification results have been obtained, in particular in the steady case $\lambda=0$.

As in the case of Ricci solitons, it is easy to see that $\rho$-Einstein solitons give rise to self-similar solutions to a perturbed version of the Ricci flow, the so called Ricci-Bourguignon flow
\begin{equation*}
\frac{\partial}{\partial t} g=-2(\Ric- \rho\Scal g)\,.\end{equation*}
In a forthcoming paper, we will develop the parabolic theory for these flows, which was first considered by Bourguignon in~\cite{jpb1}. Here we just remark that we can prove short time existence for every $-\infty<\rho< 1/2(n-1)$. However, as far as the subject of our investigation are self-similar solutions, every value of $\rho$ can, in principle, be considered. In particular, we point out that the case $\rho=1/2(n-1)$ corresponds to a metric flowing with velocity proportional to its Schouten tensor. In this case, it was proved in~\cite{CaMa} that every three-dimensional Schouten shrinker is rigid.

The main result of the present paper is the following theorem.
\begin{teo}\label{main} Let $(M^{n},g)$, with $n \geq 3$, be a complete, noncompact, gradient shrinking $\rho$-Einstein soliton with $0<\rho< 1/2(n-1)$. If $g$ has bounded curvature, nonnegative radial sectional curvature, and nonnegative Ricci curvature, then $(M^{n},g)$ is rigid.
\end{teo}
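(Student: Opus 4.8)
\noindent\emph{Strategy of the proof.} The plan is to reduce the statement to Theorem~\ref{petwyl}. If one knew that the scalar curvature $\Scal$ were constant, then, since $\rho\neq 0$, equation~\eqref{eq_soliton} would read $\Ric+\nabla^2 f=(\rho\Scal+\lambda)g$ with $\rho\Scal+\lambda$ a \emph{constant}, and this constant is positive because $\rho>0$, $\Scal=\tr\Ric\geq 0$ and $\lambda>0$. Thus $(M^n,g)$ would be a complete, noncompact, gradient shrinking Ricci soliton with bounded curvature, nonnegative radial sectional curvature and nonnegative Ricci curvature, and — provided one also checks rectifiability — Petersen--Wylie's Theorem~\ref{petwyl} would yield rigidity. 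So the proof comes down to two points: (a) the soliton is rectifiable; (b) $\Scal$ is constant.

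For (a) and for the underlying structure, I would differentiate~\eqref{eq_soliton} and use the contracted second Bianchi identity. Tracing gives $\Delta f=(n\rho-1)\Scal+n\lambda$, and taking the divergence of~\eqref{eq_soliton} gives $\Ric(\nabla f)=\mu\,\nabla\Scal$ with $\mu:=\tfrac12-(n-1)\rho>0$ (here the hypothesis $\rho<\tfrac1{2(n-1)}$ enters). Differentiating~\eqref{eq_soliton} once more and commuting covariant derivatives yields the identity
\[
\nabla_i R_{jk}-\nabla_j R_{ik}=R_{ijkl}\nabla^{l} f+\rho\bigl(\nabla_i\Scal\, g_{jk}-\nabla_j\Scal\, g_{ik}\bigr).
\]
Contracting this with $\nabla^{k} f$ (the curvature term drops out by antisymmetry in $k,l$), differentiating $\Ric(\nabla f)=\mu\nabla\Scal$ in another direction, and antisymmetrising — using that the Hessian of $\Scal$ is symmetric and that $R_{ik}R_j{}^{k}$ is symmetric in $i,j$ — one finds that everything cancels except $\rho\,(\nabla\Scal\wedge\nabla f)=0$; since $\rho\neq 0$, this forces $\nabla\Scal\wedge\nabla f\equiv 0$. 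Hence $\nabla f$ is an eigenvector of $\Ric$ at every point, and by $\Ric\geq 0$ the identity $\Ric(\nabla f,\nabla f)=\mu\langle\nabla\Scal,\nabla f\rangle$ gives $\langle\nabla\Scal,\nabla f\rangle\geq 0$, i.e.\ $\Scal$ is non-decreasing along the flow of $\nabla f$. Moreover, combining $\nabla\Scal\parallel\nabla f$ with $\tfrac12\nabla|\nabla f|^2=(\rho\Scal+\lambda)\nabla f-\mu\nabla\Scal$ shows that $\Scal$ and $|\nabla f|^2$ are locally constant on the regular level sets of $f$, so the soliton is rectifiable, settling (a).

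It remains to prove (b), which I expect to be the main obstacle. First, since the curvature is bounded and $\Ric\geq0$, the Hessian $\nabla^2 f=(\rho\Scal+\lambda)g-\Ric$ is bounded, and one shows, following the Cao--Zhou-type argument for Ricci shrinkers, that $f$ is proper, attains its minimum, grows quadratically, and $\int_M e^{-f}\,dV<\infty$, so that the drift Laplacian $\Delta_f=\Delta-\langle\nabla f,\nabla\,\cdot\,\rangle$ admits integration by parts and a weighted maximum principle. Tracing the last display (and using only the contracted Bianchi identity and~\eqref{eq_soliton}) gives
\[
\mu\,\Delta_f\Scal=(n-1)\rho\,\langle\nabla\Scal,\nabla f\rangle+(\rho\Scal+\lambda)\Scal-|\Ric|^2 .
\]
Here $\Scal$ is bounded, nonnegative and non-decreasing along $\nabla f$, while $-|\Ric|^2\le-\tfrac1n\Scal^2$ controls it from above; feeding this, together with the finite weighted volume and the Riccati-type evolution of $\Ric$ along $\nabla f$ — in which the nonnegative radial sectional curvature supplies the crucial sign on the transverse Ricci curvature of the level sets — into a maximum-principle / integral argument, one concludes $\langle\nabla\Scal,\nabla f\rangle\equiv 0$, whence $\Scal\equiv$ const. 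The delicate point is that in the genuine Ricci-soliton case ($\rho=0$) this conclusion fails without the curvature hypotheses, so one must verify that the extra $\rho\Scal$-terms, which are favourable precisely because $\rho>0$ and $\Scal\ge0$, do not spoil the argument (equivalently, one re-runs Petersen--Wylie's proof with these terms present). Once $\Scal$ is constant, the reduction of the first paragraph applies verbatim and Theorem~\ref{petwyl} finishes the proof.
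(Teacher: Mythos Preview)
Your overall architecture --- prove rectifiability, then prove $\Scal$ is constant, then invoke Theorem~\ref{petwyl} --- is exactly the paper's strategy, and your derivation of rectifiability in part~(a) matches Lemma~\ref{lemg} and Theorem~\ref{teorect}.

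The gap is in part~(b). You write the drift-Laplacian identity for $\Scal$ correctly, but the zeroth-order term $(\rho\Scal+\lambda)\Scal-|\Ric|^2$ has no a~priori sign (the inequality $-|\Ric|^2\le -\Scal^2/n$ you quote goes the \emph{wrong} way for what is needed), and your appeal to an unspecified ``Riccati-type evolution'' plus ``maximum-principle / integral argument'' does not actually connect the hypotheses to the conclusion. The paper supplies precisely this missing mechanism, and it is not a routine rerun of Petersen--Wylie: one first proves that $\nabla^2 f\ge 0$ outside a compact set. This uses (i) a Riccati comparison for $\nabla^2 r$ under nonnegative \emph{radial} sectional curvature (Lemma~\ref{lmm_hessr}), giving $\nabla^2 r\ge 0$ on $\{r>0\}$, and (ii) the fact that $f''=\lambda+\rho\Scal-\Ric(\nabla r,\nabla r)>0$ for large $r$, which requires showing $\Ric(\nabla r,\nabla r)\to 0$. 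The latter is obtained by a genuine contradiction argument analysing local maxima of $\Scal'/f'$ via equation~\eqref{equa3} and the quadratic growth of $f$ (Proposition~\ref{prp_quadratic}); it is not a soft consequence of boundedness. Once $\nabla^2 f\ge 0$ at infinity, the soliton equation gives $\Ric\le(\rho\Scal+\lambda)g$ there, hence $(\rho\Scal+\lambda)\Scal-|\Ric|^2\ge 0$, so $\Delta_f\Scal\ge 0$ on $\{r\ge r_0\}$. Rectifiability reduces this to $\Scal''\ge (f'-C)\Scal'\ge 0$ for large $r$, and boundedness of $\Scal$ forces $\Scal'\equiv 0$ on an interval $\{r\ge r_1\}$.

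There is a second ingredient you omit entirely: the argument above yields $\Scal$ constant only on an open set at infinity, and the paper then invokes real analyticity of the metric (Theorem~\ref{teo_analytic}, proved separately via ellipticity of the soliton system for $\rho\notin\{1/n,1/2(n-1)\}$) to propagate this to all of $M$. Without either analyticity or a genuinely global argument in~(b), the passage from ``$\Scal$ constant near infinity'' to ``$\Scal$ constant everywhere'' is unjustified.
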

As it is evident, the statement is the precise analogous of the aforementioned result of Petersen and Wylie. We emphasize the remarkable fact that, in our case, we do not need any  symmetry assumption. In fact, the rectifiability can be deduced from the structural equation~\eqref{eq_soliton}, as it is proved in~\cite{CaMa}.

\

\section{Preliminaries on gradient $\rho$-Einstein solitons}

First of all, we show that gradient $\rho$-Einstein solitons give rise to solutions of the Ricci-Bourguignon flow
\begin{equation}\label{eq_Einsteinflow}\frac{\partial}{\partial t} g=-2(\Ric- \rho\Scal g)\;.\end{equation}
Although the proof is quite similar to the classical one for Ricci solitons, we include it for the convenience of the reader. 

By a \emph{complete} gradient $\rho$-Einstein soliton, we mean a complete Riemannian manifold $(M,g)$ with a potential $f$ such that $\nabla^g f$ is complete and \eqref{eq_soliton} holds.

\begin{teo}\label{teo_solflusso}If $(M,g_0,f_0)$ is a complete gradient $\rho$-Einstein soliton with constant $\lambda$, then there exist 
\renewcommand{\labelenumi}{\roman{enumi}. }
\begin{enumerate}
\item a family of metrics $g(t)$, solution of the Ricci-Bourguignon flow \eqref{eq_Einsteinflow}, with $g(0)=g_0$,
\item a family of diffeomorphisms $\phi(t , \cdot \,):M\to M$, with $\phi(0, \cdot \,)=\mathrm{id}_M$,
\item a family of functions $f(t, \cdot \,):M\to \RR$ with $f(0, \cdot	\,)=f_0(\cdot)$,
\end{enumerate}
defined for every $t$ such that $\tau(t):=-2\lambda  t+1>0$. These families have the following properties:
\renewcommand{\labelenumi}{\arabic{enumi}. }
\begin{enumerate}
\item the family $\phi(t, \cdot \,)$ is generated by the vector field $\nabla^{g_0}f_0$ eventually scaled by the inverse of $\tau(t)$
\begin{equation}
\label{eq_defdiffeo}
\frac{\partial \phi}{\partial t}(t,\cdot\,) \, = \, \frac{1}{\tau(t)}(\nabla^{g_0}f_0)(\phi(t,\cdot\,))\;,
\end{equation}

\item the metric $g(t)$ is given by pull-back through $\phi(t,\cdot\,)$ and rescaling through $\tau(t)$
\begin{equation}
\label{eq_defmetr}g(t) \, = \, \tau(t)\, \phi(t,\cdot)^*g_0 \, ,
\end{equation}
\item the function $f(t)$ is given as well by pull-back, namely
\begin{equation}
\label{eq_deff}
f(t, \cdot) \, = \, (f_0\circ\phi)(t, \cdot\,)\;.\end{equation}
\end{enumerate}
\end{teo}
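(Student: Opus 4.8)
The plan is to adapt, almost verbatim, the classical argument showing that a gradient Ricci soliton generates a self-similar solution of the Ricci flow, the only new point being to keep track of the term $\rho\Scal g$ and of the constant $\lambda$. Throughout, write $\phi_t:=\phi(t,\cdot)$.

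\emph{Construction of the diffeomorphisms.} By definition of a complete gradient $\rho$-Einstein soliton, the vector field $\nabla^{g_0}f_0$ is complete, hence it generates a one-parameter group of diffeomorphisms $\psi_s\colon M\to M$, $s\in\RR$, with $\psi_0=\mathrm{id}_M$. Set $s(t):=\int_0^t\tau(\sigma)^{-1}\,d\sigma$; this is a smooth, strictly increasing function on the interval $\{t:\tau(t)>0\}$, given explicitly by $s(t)=-\tfrac{1}{2\lambda}\log\tau(t)$ when $\lambda\neq0$ and by $s(t)=t$ when $\lambda=0$, and in all cases it maps $\{\tau>0\}$ onto $\RR$. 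Define $\phi_t:=\psi_{s(t)}$. Then $\phi_0=\mathrm{id}_M$, and differentiating gives $\partial_t\phi_t(p)=s'(t)\,(\partial_s\psi_s)(p)\big|_{s=s(t)}=\tau(t)^{-1}(\nabla^{g_0}f_0)(\phi_t(p))$, which is \eqref{eq_defdiffeo}. In particular $\phi_t$ is a well-defined diffeomorphism of $M$ for every $t$ with $\tau(t)>0$. We then simply \emph{define} $g(t)$ and $f(t)$ by \eqref{eq_defmetr} and \eqref{eq_deff}; the normalizations at $t=0$ follow at once from $\tau(0)=1$ and $\phi_0=\mathrm{id}_M$.

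\emph{Verification of the flow equation.} Since $\phi_t$ is the flow of the time-dependent vector field $X(t):=\tau(t)^{-1}\nabla^{g_0}f_0$, we have $\partial_t(\phi_t^{*}g_0)=\phi_t^{*}(\mathcal{L}_{X(t)}g_0)=\tau(t)^{-1}\phi_t^{*}(\mathcal{L}_{\nabla^{g_0}f_0}g_0)=2\tau(t)^{-1}\phi_t^{*}(\nabla^{2}_{g_0}f_0)$, using $\mathcal{L}_{\nabla^{g_0}f_0}g_0=2\nabla^{2}_{g_0}f_0$. Differentiating \eqref{eq_defmetr} and using $\tau'=-2\lambda$,
\[
\partial_t g(t)\,=\,-2\lambda\,\phi_t^{*}g_0\,+\,2\,\phi_t^{*}\big(\nabla^{2}_{g_0}f_0\big)\,.
\]
Now insert the soliton equation \eqref{eq_soliton} in the form $\nabla^{2}_{g_0}f_0=\rho\Scal_{g_0}g_0+\lambda g_0-\Ric_{g_0}$ and use the elementary invariance properties of curvature: the Ricci tensor is invariant under diffeomorphisms and under constant rescalings, so $\phi_t^{*}\Ric_{g_0}=\Ric_{\phi_t^{*}g_0}=\Ric_{g(t)}$; the scalar curvature scales with weight $-1$, so the tensor $\Scal\,g$ is invariant under constant rescalings and hence $\phi_t^{*}(\Scal_{g_0}g_0)=\Scal_{\phi_t^{*}g_0}\,\phi_t^{*}g_0=\Scal_{g(t)}\,g(t)$; and $\phi_t^{*}g_0$ appears linearly. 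Substituting, the two terms proportional to $\lambda\,\phi_t^{*}g_0$ cancel and we obtain
\[
\partial_t g(t)\,=\,-2\big(\Ric_{g(t)}-\rho\Scal_{g(t)}\,g(t)\big)\,,
\]
which is \eqref{eq_Einsteinflow}.

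\emph{On the difficulty.} There is no substantial obstacle: the statement is a verification, and the only two points deserving care are, first, the time change $t\mapsto s(t)$, which is exactly what makes $\phi_t$ a globally defined diffeomorphism precisely on $\{\tau>0\}$ and is the only place where completeness of $\nabla^{g_0}f_0$ enters; and second, the bookkeeping of scaling weights in the computation above, the genuinely new feature with respect to the Ricci soliton case being the (easy) observation that the tensor $\Scal\,g$ is invariant under constant rescalings of the metric, so that the $\rho$-term transforms exactly as the Ricci term. Nothing beyond $f(0,\cdot)=f_0$ is required of $f(t)$; one may record, though, that applying $\phi_t^{*}$ to \eqref{eq_soliton} and using that the Hessian is unchanged by constant rescalings shows that $(M,g(t),f(t))$ is again a gradient $\rho$-Einstein soliton, now with constant $\lambda/\tau(t)$.
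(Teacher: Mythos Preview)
Your proof is correct and follows essentially the same strategy as the paper's: construct $\phi_t$, $g(t)$, and $f(t)$ by the prescribed formulas and then verify the flow equation by differentiating \eqref{eq_defmetr}, inserting the soliton equation, and invoking the scaling behavior of $\Ric$ and of $\Scal\,g$. The only cosmetic difference is that the paper routes the computation through the Lie derivative of $g(t)$ along $\nabla^{g(t)}f(t,\cdot)$, whereas you stay with $g_0$ throughout and pull back at the end; the two bookkeeping schemes are equivalent.
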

\begin{proof} We set $\tau(t)=-2\lambda t+1$. As $\nabla^{g_0}f_0$ is a complete vector-field, there exists a $1$-parameter family of diffeomorphisms $\phi(t,\cdot\,):M\to M$ generated by the time dependent family of vector fiels $X(t, \cdot \,) \, := \,\frac{1}{\tau(t)} \, \nabla^{g_0}f_0 (\phi(t,\cdot \,))$, for every $t$ such that $\tau(t)>0$. We also set $f(t, \cdot)= (f_0\circ\phi)(t, \cdot\,)$ and $g(t)=\tau(t)\, \phi(t)^*g_0$. We compute
$$
\frac{\partial}{\partial t}g(t) \, = \, -\frac{2\lambda}{\tau(t)}g(t) \, + \, \tau(t)\frac{\partial}{\partial t} \phi(t,\cdot)^*g_0\;.
$$
By the definition of the Lie derivative, we have that
$
\frac{\partial}{\partial t}\phi(t, \cdot)^*g_0={\mathscr{L}}_{(\phi(t)^{-1})_*\frac{\partial}{\partial t}\phi(t,\cdot)}\phi(t,\cdot)^*g_0 \, .
$
On the other hand, equation \eqref{eq_defdiffeo} implies that
$$
\frac{\partial \phi}{\partial t}(t,\cdot\,) \, = \, \frac{1}{\tau(t)}(\nabla^{g_0}f_0) (\cdot)\, = \, \frac{1}{\tau(t)}\phi(t,\cdot\,)_*\nabla^{g(t)}f(t, \cdot\,) \, ,
$$
where we used the fact that $\phi(t, \cdot)^*\nabla^{g_0}f_0=\nabla^{\phi(t, \cdot)^*g_0}\phi(t, \cdot)^*f_0 \, = \, \nabla^{g(t)}f(t, \cdot)$. 
Combining these two facts, we have that
$$
\frac{\partial}{\partial t}g(t) \, = \, -\frac{2\lambda}{\tau(t)}g(t)+\frac{1}{\tau(t)}\mathscr{L}_{\nabla^{g(t)}f(t,\cdot)}g(t)\;.
$$
Having this at hand, we compute
\begin{eqnarray*}
-\Ric(g(t)) & = & \phi(t, \cdot)^*(- \, \Ric(g_0)) \,\, = \,\, \phi(t,\cdot)^*  \bigg( \,  \frac{1}{2}\mathscr{L}_{\nabla^{g_0}f_0}g_0 \, - \, \lambda \, g_0 \, - \, \rho\Scal(g_0) \, g_0 \, \bigg)   \\
& = & \frac{1}{2}\left(\frac{1}{\tau(t)} \mathscr{L}_{\nabla^{g(t)}f(t, \cdot)}g(t) \, - \, \frac{2}{\tau(t)}\lambda \,  g(t)\right) \, - \, \frac{\rho}{\tau(t)} \, \Scal(\tau(t)^{-1}g(t)) \, g(t)\\
& = & \frac{1}{2}\frac{\partial}{\partial t}g(t) \, - \, \frac{\rho}{\tau(t)} \, \Scal(\tau(t)^{-1}g(t)) \, g(t)
\end{eqnarray*}
and we observe that $\Scal(\tau(t)^{-1}g(t)) \, = \,\tau(t)\, \Scal(g(t))$. In other words, we have obtained
$$
\frac{\partial}{\partial t}g(t) \, = \, -2 \,[ \,  \Ric(g(t)) \, - \, \rho\Scal(g(t)) \, g(t) \, ] \;,
$$
and the proof is complete.
\end{proof}

In particular, we have obtained that shrinking solitons generate ancient solutions, which blow up at $t=1/2\lambda$.

We pass now to describe a fundamental property of the gradient $\rho$-Einstein solitons, namely the rectifiability.
To do that, we recall from~\cite[Lemma 2.2 and Theorem 3.1]{CaMa} the following fundamental identities for the gradient $\rho$-Einstein solitons. We also report the proof, for the convenience of the reader.
\begin{lemma}
\label{lemg} 
Let $(M^{n},g,f)$, $n\geq 3$, be a gradient $\rho$-Einstein soliton. Then, the following identities hold true.
\begin{eqnarray}
\Delta f & = &  (n\rho-1)\RRR+n\lambda\,,\label{equa1}\\
\big(1-2(n-1)\rho\big)\nabla \RRR & = &  2 \Ric (\nabla f, \,\cdot\,)\,,\label{equa2}\\
\label{equa3}
\big(1-2(n-1)\rho\big)\Delta \RRR & = &  \langle \nabla \RRR,\nabla f\rangle + 2( \rho \RRR^{2}-|\Ric|^{2}+\lambda \RRR )\,, \\
\label{equa4} d\RRR \otimes df & = &  d f \otimes d\RRR\,.
\end{eqnarray}
\end{lemma}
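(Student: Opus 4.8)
The plan is to obtain the four identities one after another, each deduced from the structural equation~\eqref{eq_soliton} together with the previous ones and a couple of standard Riemannian formulas, with no external ingredient required. For~\eqref{equa1} I would simply take the metric trace of~\eqref{eq_soliton}, using $\trace\Ric=\RRR$ and $\trace\nabla^2 f=\Delta f$, and rearrange. For~\eqref{equa2} I would apply the divergence operator to~\eqref{eq_soliton}: the contracted second Bianchi identity gives $\div\Ric=\tfrac12\nabla\RRR$, the commutation formula gives $\div\nabla^2 f=\nabla\Delta f+\Ric(\nabla f,\cdot)$, and $\div(\rho\RRR g+\lambda g)=\rho\nabla\RRR$. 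Substituting $\nabla\Delta f=(n\rho-1)\nabla\RRR$ from~\eqref{equa1} and collecting the $\nabla\RRR$ terms yields $\big(\tfrac12-(n-1)\rho\big)\nabla\RRR=\Ric(\nabla f,\cdot)$, which is~\eqref{equa2} after multiplying by $2$.

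For~\eqref{equa3} I would take the divergence of~\eqref{equa2}. Once more $\div\Ric=\tfrac12\nabla\RRR$ produces the term $\langle\nabla\RRR,\nabla f\rangle$, while the remaining contribution is $2\langle\Ric,\nabla^2 f\rangle$; to evaluate it I would insert~\eqref{eq_soliton} in the form $\nabla^2 f=(\rho\RRR+\lambda)g-\Ric$, so that $\langle\Ric,\nabla^2 f\rangle=(\rho\RRR+\lambda)\RRR-|\Ric|^2=\rho\RRR^2+\lambda\RRR-|\Ric|^2$. Collecting the pieces gives exactly~\eqref{equa3}.

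For~\eqref{equa4} the one non-mechanical point is to feed~\eqref{eq_soliton} back into~\eqref{equa2}: writing $2\Ric(\nabla f,\cdot)=2(\rho\RRR+\lambda)\nabla f-2\nabla^2 f(\nabla f,\cdot)=2(\rho\RRR+\lambda)\nabla f-\nabla|\nabla f|^2$, identity~\eqref{equa2} becomes $\nabla\big((1-2(n-1)\rho)\RRR+|\nabla f|^2\big)=2(\rho\RRR+\lambda)\nabla f$. The left-hand side is the gradient of a function, hence the corresponding one-form is exact; taking the exterior derivative of both sides kills it and, since $d\big((\rho\RRR+\lambda)\,df\big)=\rho\,d\RRR\wedge df$, we are left with $0=2\rho\,d\RRR\wedge df$. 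As $\rho\neq0$ by hypothesis this forces $d\RRR\wedge df=0$, i.e.~$d\RRR\otimes df=df\otimes d\RRR$, which is~\eqref{equa4}. I expect this last step to be the only slightly delicate one: the observation that the structural equation forces the one-form $(\rho\RRR+\lambda)\,df$ to be, up to scaling, the differential of $(1-2(n-1)\rho)\RRR+|\nabla f|^2$ is precisely what makes $\nabla\RRR$ and $\nabla f$ proportional, and hence delivers rectifiability with no symmetry hypothesis.
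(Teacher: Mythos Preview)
Your derivations of~\eqref{equa1}--\eqref{equa3} are correct and coincide with the paper's argument: trace of~\eqref{eq_soliton}, then divergence of~\eqref{eq_soliton} combined with the contracted Bianchi identity and the commutation formula, then divergence of~\eqref{equa2} with the substitution $\nabla^2 f=(\rho\RRR+\lambda)g-\Ric$.

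For~\eqref{equa4} your argument is also correct but genuinely different from the paper's. The paper takes the full covariant derivative of~\eqref{equa2}, antisymmetrizes in the two free indices, and then must show that three separate groups of terms vanish: a Riemann term $R_{jipk}\nabla_k f\,\nabla_p f$ (by antisymmetry), a mixed term $R_{jp}\nabla_i\nabla_p f-R_{ip}\nabla_j\nabla_p f$ (by substituting~\eqref{eq_soliton} into $\nabla^2 f$ and observing that both the $g$--part and the $\Ric$--part are symmetric in $i,j$), leaving only $\rho(\nabla_i\RRR\,\nabla_j f-\nabla_j\RRR\,\nabla_i f)$. Your route bypasses all of this: rewriting~\eqref{equa2} as $d\big[(1-2(n-1)\rho)\RRR+|\nabla f|^2\big]=2(\rho\RRR+\lambda)\,df$ and applying~$d$ immediately yields $0=2\rho\,d\RRR\wedge df$. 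This is shorter and makes the mechanism transparent: the extra information beyond the classical Ricci soliton case is precisely that the ``Hamilton-type'' one-form on the right is not closed unless $d\RRR$ and $df$ are collinear. The paper's approach, while longer, has the minor advantage that it never needs the intermediate observation $\nabla^2 f(\nabla f,\cdot)=\tfrac12\nabla|\nabla f|^2$ and works entirely at the level of covariant tensors.
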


\begin{proof} Taking the trace of equation~\eqref{eq_soliton}, we obtain
$$
\Scal + \Delta f \,  =  \, n\rho\Scal+\lambda n \, ,
$$
which is equation~\eqref{equa1}. Taking the divergence of equation~\eqref{eq_soliton}, we obtain
$$
\nabla_i\RRR_{ij} + \nabla_i\nabla_i\nabla_j f \, = \, \rho\nabla_i\Scal g_{ij}\;.
$$
Using the formula for the commutation of the derivatives, we get
$$
\frac{1}{2}\nabla_j\Scal + \nabla_j\nabla_i\Delta f+\RRR_{ijip}\nabla_p f \, = \, \rho\nabla_j\Scal \,.
$$
Up to rearranging the terms, this is equivalent to
$$
\left(\frac{1}{2}-\rho\right)\nabla_j\Scal + \nabla_j\Delta f +\RRR_{jp}\nabla_p f \, = \, 0\;.
$$
If we substitute equation~\eqref{equa1}, in the identity above, we arrive to
$$
(1-2(n-1)\rho) \, \nabla\Scal \, = \, 2 \, \Ric(\nabla f,\, \cdot \, )\;,
$$
that is, equation~\eqref{equa2}. If we take the divergence of equation~\eqref{equa2}, we get
\begin{eqnarray*}
(1-2(n-1)\rho) \, \Delta\Scal & = & 2\nabla_i\RRR_{ip}\nabla_p f +2\RRR_{ip}\nabla_i\nabla_p f\\
& = & \nabla_p\Scal \nabla_p f + 2\RRR_{ip}(\rho \Scal g_{ip}+\lambda g_{ip} - \RRR_{ip})\\
&=&\langle \nabla\Scal \, | \,  \nabla f\rangle+2(\rho \Scal^2 + \lambda \Scal - |\Ric|^2)\;.
\end{eqnarray*}
This proves equation~\eqref{equa3}. Taking the covariant derivative of equation~\eqref{equa2}, we obtain
$$
(1-2(n-1)\rho) \, \nabla_i\nabla_j\Scal \, = \, 2\nabla_i\RRR_{jp}\nabla_p f + 2\RRR_{jp}\nabla_i\nabla_p f\;.
$$
By the symmetry of the Hessian, we deduce that
\begin{eqnarray*}
0 & = & (1-2(n-1)\rho) \, (\nabla_i\nabla_j \RRR \, -\, \nabla_j\nabla_i\Scal ) \\  
& = & \, 2 \, (\nabla_i\RRR_{jp}-\nabla_j\RRR_{ip})\, \nabla_p f \,  + \,  2 \, (\RRR_{jp}\nabla_i\nabla_p f- \RRR_{ip}\nabla_j\nabla_p f)\;.
\end{eqnarray*}
Taking the covariant derivative of equation (\ref{eq_soliton}) and rotating indices, we infer that
\begin{eqnarray*}
\nabla_i\RRR_{jp}-\nabla_j\RRR_{ip} & = & \nabla_j\nabla_i\nabla_pf -\nabla_i\nabla_j\nabla_pf+\rho\nabla_i\Scal \, g_{jp}-\rho\nabla_j\Scal \, g_{ip}\\
&=&\RRR_{jipk}\nabla_k f+\rho \, (\nabla_i\Scal \, g_{jp}-\nabla_j\Scal \, g_{ip}) \, .
\end{eqnarray*}
Substituting this expression in the previous formula, we get
$$0=2\RRR_{jipk}\nabla_k f\nabla_p f+2\rho(\nabla_i\Scal\nabla_j f -\nabla_j\Scal\nabla_i f)+2(\RRR_{jp}\nabla_i\nabla_p f- \RRR_{ip}\nabla_j\nabla_p f)\;.$$
We note that $\RRR_{jipk}\nabla_kf\nabla_p f=0$, as the curvature tensor is antisymmetric in the last two indices while $\nabla_k f\nabla_p f$ is symmetric. Again from equation (\ref{eq_soliton}), we obtain that
$$
\RRR_{jp}\nabla_i\nabla_q f \, = \, (\rho\Scal+ \lambda) \RRR_{jp}g_{iq}-\RRR_{jp}\RRR_{iq}
$$
and thus
$$
2(\RRR_{jp}\nabla_i\nabla_p f- \RRR_{ip}\nabla_j\nabla_p f) \, = \, 2(\rho\Scal+\lambda)(\RRR_{ij}-\RRR_{ij}) \, = \, 0\;.
$$
Substituting again, we finally get
$$
0 \, = \, \rho \, (\nabla_i\Scal\nabla_j f -\nabla_j\Scal\nabla_i f) \,.
$$
Using the fact that $\rho\neq0$, we deduce equation (\ref{equa4}) and the lemma is proved.
\end{proof}

Following~\cite{CaMa}, we notice that whenever $|\nabla f| \neq 0$ the gradient of the scalar curvature $\nabla \RRR$ is proportional to $\nabla f$. In fact, if $p \in M$ is a point such that $\nabla f (p) \neq 0$, we let $V \in T_p M$ be any vector which is orthogonal to $\nabla f$. By equation~\eqref{equa4}, we get
\begin{equation}\label{rcost}
\langle \nabla \RRR \, | \, V\rangle \,\,|\nabla f|^{2} \, = \, \langle \nabla \RRR \, | \, \nabla f\rangle \, \langle\nabla f \, | \, V\rangle\,=\,0\,,
\end{equation}
and hence $\langle \nabla \RRR \, | \, V \rangle \, = \,0$ at $p$. From this we deduce that the same is true for $\nabla |\nabla f|$. In fact, from the structural equation~\eqref{eq_soliton}, we infer that
\begin{eqnarray}\label{gfcost}
\langle \nabla|\nabla f|^{2} \, | \, V\rangle & = & 2 \,\nabla^{2} f \, (\nabla f, V)\\\nonumber
&=& (2\rho \RRR + 2 \lambda) \, \langle \nabla f \, | \, V\rangle-{2}\Ric\, (\nabla f,V) \\\nonumber
&=& -({1-2(n-1)\rho}) \, \langle \nabla \RRR \, | \, V \rangle \,\,=\,\,0\,,
\end{eqnarray}
where in the last equality we have used equation~\eqref{equa2}. In particular, we have obtained the following theorem.
\begin{teo}[\cite{CaMa} Catino-Mazzieri]
\label{teorect}
Every gradient $\rho$-Einstein soliton is rectifiable.
\end{teo}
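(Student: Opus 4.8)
The plan is to unwind the definition of rectifiability and reduce it to a pointwise statement on the level sets of $f$. By definition, the soliton is rectifiable as soon as $|\nabla f|$ is locally constant along the regular level sets of $f$. At a regular point $p$ — that is, a point with $\nabla f(p)\neq 0$ — the tangent space to the level set $\{f = f(p)\}$ is precisely the orthogonal complement of $\nabla f$ in $T_pM$, so it is enough to show that $\nabla |\nabla f|^{2}$ is proportional to $\nabla f$ at every such $p$; equivalently, that $\langle \nabla|\nabla f|^{2}\,|\,V\rangle = 0$ for every $V\in T_pM$ with $\langle\nabla f\,|\,V\rangle = 0$.

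First I would show that the same orthogonality holds for $\nabla\RRR$. Contracting the tensorial identity \eqref{equa4}, namely $d\RRR\otimes df = df\otimes d\RRR$, against $V\otimes\nabla f$ gives
\[
\langle\nabla\RRR\,|\,V\rangle\,|\nabla f|^{2} \;=\; \langle\nabla f\,|\,V\rangle\,\langle\nabla\RRR\,|\,\nabla f\rangle \;=\; 0\,,
\]
and since $|\nabla f|^{2}\neq 0$ at $p$ we conclude $\langle\nabla\RRR\,|\,V\rangle = 0$. Next, differentiating the norm and substituting the structural equation \eqref{eq_soliton} for the Hessian of $f$,
\[
\tfrac{1}{2}\langle\nabla|\nabla f|^{2}\,|\,V\rangle \;=\; \nabla^{2}f(\nabla f, V) \;=\; (\rho\RRR + \lambda)\,\langle\nabla f\,|\,V\rangle \,-\, \Ric(\nabla f, V) \;=\; -\,\Ric(\nabla f, V)\,,
\]
while identity \eqref{equa2} yields $2\,\Ric(\nabla f, V) = (1 - 2(n-1)\rho)\,\langle\nabla\RRR\,|\,V\rangle = 0$. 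Combining the two displays, $\langle\nabla|\nabla f|^{2}\,|\,V\rangle = 0$, which is exactly what was needed.

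Finally I would conclude with a routine connectedness argument: along each connected component $\Sigma$ of a regular level set of $f$, the smooth function $|\nabla f|^{2}$ has vanishing tangential gradient at every point, hence is constant on $\Sigma$, and therefore $|\nabla f|$ is constant on $\Sigma$ as well. The whole argument rests on Lemma \ref{lemg}, specifically on \eqref{equa2} and \eqref{equa4} (the latter requiring $\rho\neq 0$), so there is no genuine obstacle here; the only point to keep track of is the passage from the pointwise orthogonality to constancy on connected components of the regular level sets, which is standard.
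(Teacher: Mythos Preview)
Your proof is correct and follows exactly the same route as the paper: first use \eqref{equa4} to show $\langle\nabla\RRR\,|\,V\rangle=0$ for $V\perp\nabla f$, then combine the structural equation with \eqref{equa2} to deduce $\langle\nabla|\nabla f|^{2}\,|\,V\rangle=0$. The only addition is that you spell out the final connectedness step, which the paper leaves implicit.
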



Now, we turn our attention to the regularity of gradient $\rho$-Einstein solitons. We recall that, in harmonic coordinates, one has
\begin{equation}
\label{eq_RicHarm}
\Ric \, = \, -\frac{1}{2}\Delta (g_{ij}) + Q_{ij}(g^{-1}, \partial g) \, ,
\end{equation}
where $Q$ is a quadratic form in the coefficients of $g^{-1}$ and the first derivatives of the coefficients of $g$.

\begin{teo}
\label{teo_analytic}
Let $(M^{n},g,f)$, $n\geq 3$, be a gradient $\rho$-Einstein soliton, with $\rho \notin \{1/n,1/2(n-1)\}$. Then, in harmonic coordinates, the metric $g$ and the potential function $f$ are real analytic.
\end{teo}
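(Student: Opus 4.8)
The plan is to reduce the statement to the classical elliptic-regularity bootstrap for Einstein-type equations in harmonic coordinates. The starting point is the expression~\eqref{eq_RicHarm}: in harmonic coordinates around any point, the Ricci tensor satisfies $\Ric = -\tfrac12 \Delta(g_{ij}) + Q_{ij}(g^{-1},\partial g)$, where $Q$ is universal and quadratic in the listed arguments. Feeding this into the soliton equation~\eqref{eq_soliton} gives, schematically,
\begin{equation*}
-\tfrac12 \Delta g_{ij} \, = \, \rho\,\RRR\, g_{ij} + \lambda\, g_{ij} - \nabla^2_{ij} f - Q_{ij}(g^{-1},\partial g)\,,
\end{equation*}
which couples $g$ to $f$ and to $\RRR$. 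So we need a companion equation for $f$: taking~\eqref{equa1} from Lemma~\ref{lemg}, namely $\Delta f = (n\rho-1)\RRR + n\lambda$, we have a second elliptic equation whose right-hand side again involves $\RRR = g^{ij}\RRR_{ij}$, hence is a quadratic expression in $g^{-1}$, $\partial g$, $\partial^2 g$. The idea is to view the pair $(g_{ij}, f)$ as the solution of a (quasilinear, diagonal principal part) elliptic system and run a standard Schauder/$L^p$ bootstrap, then invoke analytic elliptic regularity (Morrey) at the end.

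First I would fix harmonic coordinates on a small ball; in such coordinates $g_{ij}\in C^{1,\alpha}$ to begin with (this is the standard regularity of the harmonic atlas, given that $g$ is a priori, say, $C^\infty$ or at least $C^2$ — for a soliton one may simply start from smoothness of the underlying Riemannian metric as part of the hypotheses, or from the $C^{1,\alpha}$ harmonic-coordinate regularity if one wants the sharp statement). With $g\in C^{1,\alpha}$ and $f\in C^{2,\alpha}$ one reads the system above: the equation for $f$ has coefficients ($g^{ij}$ in the Laplacian) of class $C^{1,\alpha}$ and right-hand side $(n\rho-1)\RRR+n\lambda$; but $\RRR$ involves second derivatives of $g$, so one must be careful about the order of bootstrapping. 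The clean way is to bootstrap $g$ first using the elliptic equation for $g_{ij}$ treating $\nabla^2 f$ and $Q$ as the inhomogeneous term: this requires controlling $\nabla^2 f$, which by the $f$-equation and interior Schauder estimates is controlled by $\RRR$ in $C^{0,\alpha}$, i.e.\ by $\partial^2 g$ in $C^{0,\alpha}$ — so the two equations must be bootstrapped simultaneously. Concretely, if $g\in C^{k,\alpha}$ and $f\in C^{k+1,\alpha}$, then $\RRR\in C^{k-2,\alpha}$, so from the $f$-equation $f\in C^{k,\alpha}$, i.e.\ $\nabla^2 f\in C^{k-2,\alpha}$; plugging into the $g$-equation, $Q(g^{-1},\partial g)\in C^{k-1,\alpha}$ and the inhomogeneous term is in $C^{k-2,\alpha}$, so $g\in C^{k,\alpha}$. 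This does not immediately gain a derivative, so one needs the sharper observation that $\RRR$ appears only through $g^{ij}\RRR_{ij}$ and, more importantly, that the full coupled system has diagonal Laplacian principal part; applying the interior Schauder estimate to the system $(g_{ij},f)$ jointly — with the right-hand side expressed as a smooth function of $(g,g^{-1},\partial g, \partial^2 f)$ and ($\partial^2 f$ itself solved in terms of lower data via the $f$-equation, after differentiating once to move the loss of derivatives onto $\partial^3 g$, which is then absorbed) — yields the inductive gain $(g,f)\in C^{k+1,\alpha}$. Hypothesis $\rho\neq 1/n$ is exactly what keeps the $f$-equation~\eqref{equa1} nondegenerate in the relevant rearrangement (the coefficient $n\rho-1$ must interact correctly), and $\rho\neq 1/2(n-1)$ is what keeps~\eqref{equa2}–\eqref{equa3} usable, so that $\RRR$ itself satisfies a genuine second-order elliptic equation rather than a degenerate one; both are therefore needed to close the argument cleanly.

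Having reached $g,f\in C^\infty$ in harmonic coordinates, the final step is analyticity: the system has the form $\Delta g_{ij} = F_{ij}(x, g, \partial g, \partial^2 f)$, $\Delta f = G(x,g,\partial g,\partial^2 g)$ with $F,G$ real-analytic (in fact polynomial) in their arguments; this is an elliptic system with analytic nonlinearity and analytic (Laplacian) principal symbol, so by the Morrey–Nirenberg analytic regularity theorem for nonlinear elliptic systems the solution $(g_{ij},f)$ is real-analytic. One technical point to dispatch is that the system is only \emph{quasi}-diagonal — second derivatives of $f$ enter the $g$-equations and second derivatives of $g$ enter (through $\RRR$) the $f$-equation — so before quoting Morrey one should rewrite it as a determined elliptic system in the sense of Douglis–Nirenberg (assigning weights: $2$ to each $g_{ij}$-equation and $2$ to the $f$-equation, with unknowns $g_{ij}$ of weight $2$ and $f$ of weight $2$; the mixed second-order terms respect these weights), check that the principal symbol is invertible (it is block-diagonal $|\xi|^2$ up to lower-weight terms that drop out of the symbol computation), and then apply the analytic version of the Douglis–Nirenberg estimates. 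The main obstacle, then, is not any single estimate but the careful organization of the simultaneous bootstrap so that the derivative loss coming from $\RRR$ (two derivatives of $g$) inside the $f$-equation is correctly tracked and absorbed — once the system is correctly set up as a determined elliptic Douglis–Nirenberg system, both the $C^\infty$ bootstrap and the analyticity are standard, and the exclusion of the two exceptional values of $\rho$ is precisely what makes this possible.
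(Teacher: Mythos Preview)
Your system is not elliptic, and the claim that its principal symbol is ``block-diagonal $|\xi|^2$ up to lower-weight terms that drop out'' is where the argument breaks. With unknowns $(g_{ij},f)$ and Douglis--Nirenberg weights $2$ on each, the term $\nabla^2_{ij}f$ in the soliton equation is second order in $f$ and hence \emph{principal}; likewise the $\RRR$ in $\Delta f=(n\rho-1)\RRR+n\lambda$ is second order in $g$ and principal. So the symbol is genuinely coupled. Worse, your second equation~\eqref{equa1} is exactly the $g$-trace of the soliton equation, so at the symbol level the second row is (minus) the trace of the first block. Concretely, for $|\zeta|=1$ the symbol of the soliton equation in direction $(h,\phi)$ is $\tfrac12 h_{ij}-\tfrac{\rho}{2}(\tr_g h)g_{ij}-\zeta_i\zeta_j\phi$, and the symbol of~\eqref{equa1} is $-\phi+\tfrac{1-n\rho}{2}\tr_g h$, which is precisely minus the trace of the former. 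The kernel is therefore nontrivial: for any $\phi\neq 0$ take $h_{ij}=\tfrac{2\rho\phi}{1-n\rho}g_{ij}+2\zeta_i\zeta_j\phi$. Hence you cannot invoke Douglis--Nirenberg or Morrey on this system, and no amount of bootstrapping will close, which is exactly the derivative loss you noticed and could not repair.

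The paper sidesteps this by changing variables to $(g,\nabla f)$ and replacing the trace equation by a second-order equation for $\nabla f$, namely $-\Delta\nabla f=\tfrac{1-2\rho}{1-2(n-1)\rho}\,\Ric(\nabla f,\cdot)$, obtained by combining the divergence of~\eqref{eq_soliton} with~\eqref{equa2}. Now $\nabla^2 f$ is only \emph{first} order in the unknown $W=\nabla f$, so it drops out of the symbol of the first equation; the full symbol becomes upper-triangular, with diagonal blocks $h\mapsto\tfrac12|\zeta|^2(h-\rho(\tr_g h)g)$ and $W\mapsto|\zeta|^2W$. The first block is invertible exactly when $\rho\neq 1/n$, and Morrey applies. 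This is also the correct explanation for the hypothesis $\rho\neq 1/2(n-1)$: it is needed to write down the auxiliary equation for $\nabla f$ at all (the coefficient $\tfrac{1-2\rho}{1-2(n-1)\rho}$ would blow up), not---as you suggest---to make~\eqref{equa3} into a nondegenerate equation for $\RRR$, which plays no role in the analyticity proof.
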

\begin{proof} We note that taking the divergence of equation~\eqref{eq_soliton} we get
$$
\nabla_j\nabla_j\nabla_i f \, = \, -\nabla_j\RRR_{ij}+\rho\nabla_i \Scal g_{ij}=-\frac{1}{2}\nabla_i\Scal + \rho\nabla_i\Scal \, .
$$
Thus, using equation~\eqref{equa2}, we obtain
$$-\Delta \nabla f \, = \, \frac{1-2\rho}{1-2(n-1)\rho}\Ric(\nabla f, \cdot)\;.
$$
To prove our statement, it is useful to consider the system
$$
\left\{\begin{array}{rcl}
\Ric \,+ \, \nabla^2 f \, - \, \lambda\, g \, - \, \rho \,\Scal \,g & = & 0 \\
-\Delta \nabla f \,- \,\Big( \frac{1-2\rho}{1-2(n-1)\rho} \Big)\,\, \Ric(\nabla f, \,\cdot \,) & = & 0 \, ,\end{array} \, \right. 
$$
with respect to the unknowns $(g,\nabla f)$. According to~\eqref{eq_RicHarm}, we have that in harmonic coordinates the scalar curvature is given by
$$
\Scal \, = \, -\frac{1}{2}g^{ij}\Delta(g_{ij}) + g^{ij}Q_{ij}(g,\partial g) \, .
$$
Thus, the linearization of the previous system in the direction of $(h,W)\in S^2T^*M\oplus TM$ is given by
$$
\displaystyle{\left\{\begin{array}{rcl}
-\dfrac{1}{2}g^{rs}\dfrac{\de^2 h_{ij}}{\de x^r\de x^s}+\dfrac{\rho}{2}g^{kl}g^{rs}\dfrac{\de^2 h_{kl}}{\de x^r \de x^s}g_{ij}+\textrm{l.o.t.}&=&0\\
 & & \\
-g^{rs}\dfrac{\de^2 W_i}{\de x^r\de x^s}+\dfrac{1-2\rho}{2-4(n-1)\rho}g^{rs}\dfrac{\de^2 h_{ij}}{\de x^r\de x^s}\nabla_j f+\textrm{l.o.t.}&=&0 \, ,\end{array}\right.}
$$
where l.o.t denotes terms involving only $W$, $h$ or their first derivatives. Therefore, the principal symbol $\sigma_{\zeta}:S^2T^*M\oplus TM\to S^2T^*M\oplus TM$ is given by
$$
(h, W) \, \longmapsto \, \sigma_\zeta (h, W) \, = \,  \bigg( \, \frac{1}2|\zeta|^2_{g}(h-\rho (\mathrm{tr}_{g}h) g) \, , \,  |\zeta|^2_{g}W -L_\zeta h \, \bigg) \, ,
$$
where $L_\zeta h$ is some linear function of $h$. If $\sigma_\zeta(h,W)=0$ and $\zeta \neq 0$, then $h=\rho (\mathrm{tr}_g h) g$ and thus
$$
\mathrm{tr}_g h \, = \, \rho\, n \,\tr_gh \, ,
$$
that is $\mathrm{tr}_gh=0$ or $\rho n=1$. The latter implies that $\rho=1/n$, which is excluded by our hypothesis, whereas the former gives $h=0$, since $h=\rho (\mathrm{tr}_g h) g$ and by definition $\rho \neq 0$. Consequently, if $\sigma_\zeta(h,W)=0$, then we must have $h=0$, which implies $W=0$. This shows that, if $\rho\neq1/n$ and $\zeta \neq 0$, the symbol $\sigma_\zeta$ is an automorphism of $S^2T^*M\oplus TM$ and this in turn implies that the linearization of the system is elliptic.

If $(g,\nabla f)$ have ${C}^2$-regularity in harmonic coordinates, we can apply Morrey's interior regularity theorem \cite[Theorem 6.7.6]{Morr} and since our system of equations is analytic in both its dependent and independent variables, the solutions are real analytic as well.
We observe that in general $(g,\nabla f)$ could be only ${C}^{1,\alpha}$ after passing to harmonic coordinates. To overcome this difficulty, we apply Theorem 9.19 in \cite{GiTr} to the components of the system, to obtain that $(g,\nabla f)$ are in fact ${C}^{2,\alpha}$. 
\end{proof}

\

\section{Estimates on the growth of the potential function}

In this section we consider shrinking solitons with bounded non-negative scalar curvature, namely $\RRR\geq 0$ and $|\RRR|\leq K$, for some positive constant $K$. 
To proceed, we observe that, in force of the rectifiability of the gradient $\rho$-Einstein solitons (see Theorem~\ref{teorect}), either $f$ is constant on $M$ or there exists a hypersurface $\Sigma_0 \subset M$, which is a connected component of a regular level set of $f$. We also recall that, in the latter case, the potential function $f$, as well as the scalar curvature $\RRR$ and the function $|\nabla f|$, only depends on the signed distance $r$ to $\Sigma_0$, {\em a priori} only in a suitable neighborhood of it. On the other hand, since $f$ is real analytic, we have that the level sets where $|\nabla f|= 0$ cannot accumulate, unless $f$ is constant. Hence, as soon as a regular level set exists, we have that $f$ only depends on the signed distance to $\Sigma_0$ on the whole manifold. Of course, the same is true for $\RRR$ and $|\nabla f|$.  

The goal of this section is to prove that either $f$ is constant, or we have an estimate of the following type
$$
A\, (\,|r|+B\, )^2 \, \geq \, f(r) \, \geq \, C\,(\,|r| - D\,)^2 \, ,
$$
where $A,B,C$ and $D$ are positive real constants.
We start with the following lemma, whose proof is not as direct as in the Ricci soliton case, due to the lack of the Hamilton's identity.
\begin{lemma}
\label{lmm_grad_lin} 
Let $(M^n, g,f)$ be a gradient shrinking $\rho$-Einstein soliton with $\rho>0$, $\RRR\geq 0$ and such that $|\RRR|\leq K$, for some positive constant $K$. Then, either $f$ is constant or there exist positive real constants $a^\pm$, $b^\pm$, $c^\pm$ and $d^\pm$, such that 
\begin{eqnarray*}
c^+f(r) - d^+ \, \leq \, |\nabla f|^2(r) \, \leq \, a^+f(r) + b^+\;, & & \hbox {for\quad}r\geq 0, \\
c^-f(r) - d^- \, \leq \, |\nabla f|^2(r) \, \leq \, a^-f(r) + b^-\;, & & \hbox {for\quad}r\leq 0,
\end{eqnarray*}
where $r$ is the signed distance to a connected component $\Sigma_0\subset M$ of some regular level set of $f$. The constants which appear in the estimates are possibly depending on $\Sigma_0$.
\end{lemma}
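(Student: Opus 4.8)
The plan is to replace the Hamilton identity available for Ricci solitons by an approximate version of it, valid along the radial direction, and then to reduce the two estimates to a monotonicity analysis of $f$. Assume $f$ is non-constant; then, by Theorems~\ref{teorect} and~\ref{teo_analytic} and the discussion preceding the statement, $f$, $\RRR$ and $|\nabla f|^2$ are real analytic functions of the signed distance $r$ to $\Sigma_0$ on all of $M$, away from the cut locus of $\Sigma_0$, and $\nabla f = f'\,\nabla r$ with $f' = \tfrac{d}{dr}f$, so that $|\nabla f|^2 = (f')^2$. Differentiating $|\nabla f|^2$ along $\nabla r$ and using $\nabla^2 f = (\rho\RRR + \lambda)g - \Ric$ from~\eqref{eq_soliton} together with $\Ric(\nabla f,\cdot) = \tfrac12\big(1-2(n-1)\rho\big)\nabla\RRR$ from~\eqref{equa2}, one gets
\begin{equation*}
\frac{d}{dr}\,|\nabla f|^2 \;=\; 2\,\nabla^2 f(\nabla f,\nabla r) \;=\; 2(\rho\RRR + \lambda)\,f' \;-\;\big(1-2(n-1)\rho\big)\,\RRR'\,,
\end{equation*}
that is, setting $G := |\nabla f|^2 - 2\lambda f + \big(1-2(n-1)\rho\big)\RRR$,
\begin{equation*}
G'(r)\;=\;2\rho\,\RRR(r)\,f'(r)\,.
\end{equation*}
For $\rho = 0$ this is exactly the conservation law $|\nabla f|^2 - 2\lambda f + \RRR \equiv \mathrm{const}$; for $\rho > 0$ there is a remainder $2\rho\RRR f'$, whose sign is that of $f'$ because $\rho > 0$ and $\RRR \geq 0$.

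On any interval $[R_0,\infty) \subset [0,\infty)$ on which $f$ is monotone the estimates follow immediately. If, say, $f' \geq 0$ there, integrating the identity and using $0 \leq \RRR \leq K$ gives $0 \leq G(r) - G(R_0) \leq 2\rho K\,(f(r) - f(R_0))$ for $r \geq R_0$; since $|\nabla f|^2 = G + 2\lambda f - (1-2(n-1)\rho)\RRR$ and $0 \leq \RRR \leq K$, this yields, for all $r \geq R_0$,
\begin{equation*}
2\lambda\, f(r) - d^{+} \;\leq\; |\nabla f|^2(r) \;\leq\; (2\lambda + 2\rho K)\, f(r) + b^{+}\,,
\end{equation*}
with $b^{+},d^{+}$ depending only on $n,\rho,\lambda,K$ and on $G(R_0), f(R_0)$; the case $f' \leq 0$ is symmetric. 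On the complementary compact part $[0,R_0]$ both bounds hold trivially after enlarging $b^{+}$ and $d^{+}$ (using $0 \leq |\nabla f|^2 \leq \max_{[0,R_0]}|\nabla f|^2$ and $2\lambda f - d^{+} \leq 0$ there). Thus, as soon as one knows that $f$ is monotone on $[0,\infty)$ outside a compact set, one obtains the first line of the statement with $c^{+} = 2\lambda$ and $a^{+} = 2\lambda + 2\rho K$; the argument on $(-\infty,0]$ is identical and produces the second line. If instead $f$ happens to be bounded on $\{r \geq 0\}$, the bounds again hold, now with $a^{+},c^{+}$ any positive numbers and $b^{+},d^{+}$ large.

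The crux — and the point where one cannot follow Petersen--Wylie, since no lower bound on $\Ric$ is assumed — is therefore to prove that on each of $\{r \geq 0\}$ and $\{r \leq 0\}$ the function $f$ is monotone outside a compact set, or at least bounded. Equivalently, one must rule out that $f$ has infinitely many turning points $p_1 < q_1 < p_2 < \cdots$ accumulating at an end with $f(p_i) \to +\infty$; this is genuinely the hard part, because $|\nabla f|^2$ vanishes at every turning point while $f$ there would tend to $+\infty$, contradicting the very lower bound one is proving. To exclude it I would combine three facts from the preliminaries: first, the monotonicity-free inequality $\big|(\sqrt{P})'\big| \leq \rho K + \lambda$ for $P := |\nabla f|^2 + (1-2(n-1)\rho)\RRR$, which follows at once from the displayed identity and already gives $|\nabla f|^2(r) \leq P(r) \leq \big(\sqrt{P(0)} + (\rho K + \lambda)|r|\big)^2$; second, the bound $\tfrac{1}{2\lambda}\big(1-2(n-1)\rho\big)K$ on the oscillation of $f$ between two consecutive turning points, obtained by integrating $G' = 2\rho\RRR f'$ and using $|\nabla f|^2 = 0$ at the endpoints; and third, the constraint from the trace identity~\eqref{equa1} that at a turning point $\frac{d^2}{dr^2}f = (n\rho - 1)\RRR + n\lambda$, whence $\RRR \leq \tfrac{n\lambda}{1-n\rho}$ at local minima and $\RRR \geq \tfrac{n\lambda}{1-n\rho}$ at local maxima of $f$ (so, in particular, if $K < \tfrac{n\lambda}{1-n\rho}$ there is at most one turning point and the lemma is already proved). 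I expect these not to suffice in general — the bounded-oscillation estimate does not by itself prevent a slow drift of $f$ to $+\infty$ through infinitely many small oscillations — and that the remaining case must be closed by a growth estimate for $f$ along minimizing geodesics, in the spirit of Cao--Zhou's argument for Ricci shrinkers but adapted to the Ricci--Bourguignon equation and using only $0 \leq \RRR \leq K$ and~\eqref{eq_soliton}.
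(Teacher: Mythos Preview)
Your radial identity $G'=2\rho\,\RRR\,f'$ with $G=|\nabla f|^{2}-2\lambda f+(1-2(n-1)\rho)\RRR$ is exactly the computation the paper carries out; the paper merely packages it as two auxiliary functions, $\Phi=a^{+}f-|\nabla f|^{2}-(1-2(n-1)\rho)\RRR$ with $a^{+}>2\lambda+2\rho K$ and $\Psi=|\nabla f|^{2}-c^{+}f+(1-2(n-1)\rho)\RRR$ with $c^{+}\le 2\lambda$, and shows that $\langle\nabla\Phi\,|\,\nabla f\rangle\ge 0$ and $\langle\nabla\Psi\,|\,\nabla f\rangle\ge 0$. With $c^{+}=2\lambda$ one has $\Psi=G$, so at the level of identities you and the paper agree.

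Where you part company is in elevating ``eventual monotonicity of $f$'' to a separate hard step and proposing to close it with a Cao--Zhou type growth estimate. The paper does nothing of the kind: having $\langle\nabla\Phi\,|\,\nabla f\rangle\ge 0$, it integrates $\Phi$ along the forward integral curve of $\nabla f$ emanating from $\Sigma_{0}$ and simply asserts $\Phi(r)\ge\Phi(0)$ for every $r\ge 0$ (and similarly for $\Psi$, and along $-\nabla f$ for $r\le 0$). Since $\nabla f=f'\nabla r$, this step tacitly assumes that such an integral curve sweeps out all of $\{r\ge 0\}$, i.e.\ that $f'$ does not vanish on $(0,\infty)$ --- precisely the monotonicity you isolate. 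So your concern is not unreasonable, but it is not something the paper proves inside this lemma; it is glossed over. Moreover, the Cao--Zhou argument you propose as a remedy is the content of the \emph{next} lemma in the paper (the quadratic lower bound on $f$), and that lemma takes the present one as input; so closing your gap that way would be circular relative to the paper's structure. In short: your computation matches the paper, but your proof is left explicitly incomplete at a point where the paper's proof proceeds without comment, and the additional machinery you suggest is both absent from, and logically downstream of, the paper's argument for this lemma.
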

\begin{proof} Let us assume that $f$ is not constant. Then, there exist a point $p_0$, such that $|\nabla f|(p_0)>0$. We now let $\Sigma_0$ be the connected component of the level set $\{ f= f(p_0)\}$, which contains the point $p_0$. By Theorem~\ref{teorect}, we have that $|\nabla f|$ is constant along $\Sigma_0$. Therefore, $\Sigma_0$ is regular. According to the discussion above, we let $r$ be the signed distance to $\Sigma_0$. The orientation of $r$ is the one which insures $\langle \nabla f \, | \nabla r \rangle \geq 0$ around $\Sigma_0$.

We consider now the function $a^+ f-|\nabla f|^2$, with $a^+ >0$. If we compute its derivative along $\nabla f$, we get
\begin{eqnarray*}
\langle \nabla(a^+ f-|\nabla f|^2)\vert \nabla f\rangle & = & a^+ |\nabla f|^2-2\nabla^2 f(\nabla f, \nabla f)\\
&=& a^+ |\nabla f|^2 + 2\Ric(\nabla f, \nabla f)- 2\rho \Scal|\nabla f|^2- 2\lambda |\nabla f|^2\\
&=& (a^+ -2\lambda -2\rho \Scal)|\nabla f|^2 + (1-2(n-1)\rho)\langle \nabla \Scal\vert \nabla f\rangle \, ,
\end{eqnarray*}
where the last equality was obtained by equation~\eqref{equa2} together with~\eqref{eq_soliton}. Therefore, one has
$$
\langle \nabla(a^+ f - |\nabla f|^2 - (1-2(n-1)\rho)\Scal)\, \vert \, \nabla f\rangle \, =  \, (a^+ -2\lambda -2\rho \Scal)|\nabla f|^2\;.
$$
If $\Scal \leq K$, for some real constant $K$, then it is enough to choose $a^+ > 2\lambda+2\rho K$ to obtain that the function $ \Phi = a^+ f-|\nabla f|^2-(1-2(n-1))\rho \Scal$ is increasing in the direction of $\nabla f$. We let now $q$ be a point in $M$ such that there exists an integral curve $\gamma \, : \, [0, L] \rightarrow M$ of $\nabla f$ with $\gamma(0) \in \Sigma_0$ and $\gamma (L) = q$. Integrating the function $\Phi \circ \gamma$ on $[0,L]$ and using the computation above, it is immediate to see that $\Phi(q) \geq  \Phi (\gamma(0))$. With a small abuse of notation, we can consider $\Phi$ as a function of $r$ and the last inequality can be written as $\Phi(r) \geq \Phi (0)$, for every $r\geq 0$. By the definition of $\Phi$, we obtain, for every $r\geq 0$, the estimate
%
$$
|\nabla f|^2 (r) \, \leq \,  a^+ f(r)-(1-2(n-1)\rho)\Scal - \Phi(0)  \, \leq \, a^+ f(r) + b^+ \, ,
$$
where we used the fact that $|\RRR|<K$ and we set $b^+= |(1-2(n-1)\rho)K|  + |\Phi(0)|$.
To proceed, we consider now the function $|\nabla f|^2- c^+ f$, with $c^+>0$, and we compute its radial derivative, namely
\begin{eqnarray*}
\langle \nabla(|\nabla f|^2- c^+ f)\vert \nabla f\rangle&=&2\nabla^2 f(\nabla f, \nabla f) - c^+ |\nabla f|^2\\
&=&-2\Ric(\nabla f, \nabla f) + (2\lambda - c^+) |\nabla f|^2 + 2\rho \Scal|\nabla f|^2\\
& \geq &-(1-2(n-1)\rho)\langle \nabla \Scal, \nabla f\rangle+2\rho\Scal|\nabla f|^2 \, ,
\end{eqnarray*}
provided $c^+\leq 2\lambda$. Therefore, we have that 
$$
\langle \nabla(|\nabla f|^2 - c^+ f+(1-2(n-1)\rho)\Scal) \, \vert \, \nabla f\rangle \, = \, 2\rho \, \Scal \, | \nabla f|^2 \, \geq \, 0 \, ,
$$
since $\Scal\geq0$ and $\rho\geq 0$. Reasoning as before, we set now $\Psi = |\nabla f|^2 - c^+ f+(1-2(n-1)\rho)\Scal $ and we get $\Psi(r) \geq \Psi(0)$, for every $r\geq 0$.
In other words, since $\Scal$ is bounded, there exists a positive constant $d^+$, possibly depending on $\rho$, $\Psi(0)$ and the scalar curvature bound $K$, such that, for every $r\geq0$, the following inequality holds
$$
|\nabla f|^2 (r) \, \geq \, \Psi(0) + c^+ f(r) - (1-2(n-1)\rho)\Scal(r) \, \geq \, c^+ f(r) - d^+ \, .
$$
So far, we have obtained the desired estimate provided $r\geq 0$, namely
$$
c^+f(r) - d^+ \, \leq \, |\nabla f|^2(r) \, \leq \, a^+f(r) + b^+\;,
$$
To obtain the analogous estimates, in the case $r\leq 0$, it is sufficient to compute the derivatives of the functions $a^- f - |\nabla f|^2$ and $|\nabla f|^2 - c^- f$ along the vector field $-\nabla f$, and to check that it is possible to choose the positive constants $a^-,b^- ,c^-$ and $d^-$ in such a way that 
$$
c^-f(r) - d^- \, \leq \, |\nabla f|^2(r) \, \leq \, a^-f(r) + b^-\;.
$$
Since the reasoning is the same as in the the case $r\geq 0$, we left the details to the reader. This concludes the proof of the lemma.
\end{proof}

We now proceed with another lemma, which contains an estimate on the lower bound for the potential function $f$. We will employ a slight variation of the method exposed in~\cite{caozhou}.

\begin{lemma}
\label{lmm_stima2} 
Let $(M^n, g,f)$ be a gradient shrinking $\rho$-Einstein soliton with $\rho>0$, $\RRR\geq 0$ and such that $|\RRR|\leq K$, for some positive constant $K$. Then, either $f$ is constant on $M$ or there exist positive constants $C$ and $D$, such that 
$$
f (r) \, \geq \, C ( \,|r| - D \,)^2 \, ,
$$
where $r$ is the signed distance to a connected component $\Sigma_0\subset M$ of some regular level set of $f$. The constants which appear in the estimate are possibly depending on $\Sigma_0$.
\end{lemma}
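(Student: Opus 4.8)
The plan is to show that, whenever $f$ is non-constant, $f$ grows at least quadratically in the direction of increasing $|r|$, and to read off the rate directly from the gradient estimate of Lemma~\ref{lmm_grad_lin}. As recalled just before that lemma, non-constancy of $f$ together with Theorem~\ref{teorect} and the analyticity of Theorem~\ref{teo_analytic} forces $f$, $\RRR$ and $|\nabla f|$ to depend only on the signed distance $r$ to a connected component $\Sigma_0$ of a regular level set of $f$, on all of $M$; in particular $|\nabla f| = |f'(r)|$, and $f(\gamma(s)) = f(s)$ along any geodesic $\gamma$ issuing from $\Sigma_0$ that realizes the distance to $\Sigma_0$ (recall $r(\gamma(s))=s$ there). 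Moreover the upper estimates of Lemma~\ref{lmm_grad_lin} already give $f \ge -b^{+}/a^{+}$ on $\{r\ge 0\}$ and $f\ge -b^{-}/a^{-}$ on $\{r\le 0\}$, so $f$ is bounded below on $M$, and on $\{r\ge 0\}$ one has $|\nabla f|^{2}\le a^{+}f+b^{+}$. It suffices to prove the estimate for $r\to+\infty$ on a side where $r$ is unbounded — the side $r\to-\infty$ being treated identically with the constants $c^{-},d^{-}$, and on any range where $r$ stays bounded the inequality being immediate for suitable constants.

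I would first isolate the following one–dimensional consequence of Lemma~\ref{lmm_grad_lin}. On the open set $\mathcal U:=\{r:\ f(r)>d^{+}/c^{+}\}$ one has $f'(r)^{2}=|\nabla f|^{2}(r)\ge c^{+}f(r)-d^{+}>0$, so $f$ has no critical point in $\mathcal U$; hence $f'$ has constant sign on each connected component of $\mathcal U$, and since $f=d^{+}/c^{+}$ at every finite endpoint of such a component, no component can be bounded. Consequently, if $f(r)>d^{+}/c^{+}$ for arbitrarily large $r$, then $\mathcal U$ contains a half-line $(a_{+},\infty)$ on which $f$ is increasing (decreasing is excluded: it would force $h:=f-d^{+}/c^{+}\to 0^{+}$ with $(h')^{2}\ge c^{+}h$, $h'<0$, making $\sqrt h$ decrease at a definite rate, which is absurd); and on $(a_{+},\infty)$, $f$ cannot converge to a finite limit $L>d^{+}/c^{+}$, since then $f'(r)^{2}\ge c^{+}L-d^{+}>0$ would force $f$ to grow without bound. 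Thus $f(r)\to+\infty$. Once this is known, $\tfrac{d}{dr}\sqrt{f(r)}=\tfrac{f'(r)}{2\sqrt{f(r)}}\ge\tfrac{\sqrt{c^{+}f(r)-d^{+}}}{2\sqrt{f(r)}}\to\tfrac{\sqrt{c^{+}}}{2}$ as $r\to\infty$, so $\sqrt f$ is eventually increasing at a definite linear rate and $f(r)\ge C(r-D)^{2}$ for $r$ large; absorbing the bounded range of $r$ into the constants, and repeating on the other side, concludes the proof.

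So the heart of the matter — and the point where the absence of Hamilton's identity $\RRR+|\nabla f|^{2}=f$ bites, as emphasized in the text — is to exclude the remaining possibility that $f\le d^{+}/c^{+}$ for all large $r$: by the dichotomy above this would make $f$, hence $|\nabla f|=|f'|$ (again by Lemma~\ref{lmm_grad_lin}), bounded on all of $\{r\ge 0\}$. To rule it out I would use the $\rho$-Einstein structure. Pick $q_k$ with $r(q_k)=:T_k\to\infty$ and a minimizing geodesic $\gamma_k:[0,T_k]\to M$ from $\Sigma_0$ to $q_k$; it realizes the distance between its endpoints and $\tfrac{d^{2}}{ds^{2}}f(\gamma_k(s))=f''(s)$. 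Since along a geodesic $\tfrac{d^{2}}{ds^{2}}f(\gamma_k(s))=\nabla^{2}f(\gamma_k',\gamma_k')$, the structural equation~\eqref{eq_soliton} together with $\rho>0$ and $\RRR\ge 0$ gives
$$
\Ric(\gamma_k'(s),\gamma_k'(s))\;=\;\lambda+\rho\,\RRR(\gamma_k(s))-f''(s)\;\ge\;\lambda-f''(s)\;.
$$
On the other hand, inserting $V_i(s)=\sin(\pi s/T_k)\,e_i(s)$, with $\{e_i\}_{i=1}^{n-1}$ a parallel orthonormal frame of $(\gamma_k')^{\perp}$, into the second variation of arc length along the minimizing $\gamma_k$ and summing over $i$ gives $\tfrac{(n-1)\pi^{2}}{2T_k}\ge\int_{0}^{T_k}\sin^{2}(\pi s/T_k)\,\Ric(\gamma_k',\gamma_k')\,ds$. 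Combining the two and integrating by parts once (using $\sup_{[0,T_k]}|f'|\le\sup_{\{r\ge 0\}}|f'|<\infty$),
$$
\frac{(n-1)\pi^{2}}{2T_k}\;\ge\;\frac{\lambda T_k}{2}-\int_{0}^{T_k}\sin^{2}\!\Big(\frac{\pi s}{T_k}\Big)f''(s)\,ds\;=\;\frac{\lambda T_k}{2}+\frac{\pi}{T_k}\int_{0}^{T_k}\sin\!\Big(\frac{2\pi s}{T_k}\Big)f'(s)\,ds\;\ge\;\frac{\lambda T_k}{2}-\pi\sup_{\{r\ge 0\}}|f'|\;,
$$
which is impossible for $T_k$ large. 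Hence $f$ is not bounded on $\{r\ge 0\}$, so $f(r)>d^{+}/c^{+}$ for arbitrarily large $r$, and the dichotomy of the previous paragraph yields $f(r)\to+\infty$, as required.

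The main obstacle is precisely this exclusion of a bounded potential: since no curvature sign or bound is assumed in the statement, the positivity of the radial Ricci curvature at infinity must be extracted from the soliton equation itself — through $\Ric(\nabla r,\nabla r)=\lambda+\rho\RRR-f''$ and the fact that $\int_{0}^{T}\sin^{2}(\pi s/T)f''\,ds=O(1)$ once $|\nabla f|$ is bounded — and then converted into a contradiction via the second variation of arc length along the minimizing geodesics to $\Sigma_0$; it is the double use of Lemma~\ref{lmm_grad_lin}, to bound $|\nabla f|$ and to provide the one–dimensional dichotomy for $f$, that makes the Cao--Zhou scheme of~\cite{caozhou} go through in the present setting. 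The only degenerate case to dispatch is that in which $r$ happens to be bounded on the side under consideration: then the estimate there is vacuous and the non-compactness of $M$ is carried by the opposite side, so no generality is lost.
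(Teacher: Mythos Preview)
Your argument is correct, but it is organized differently from the paper's. Both proofs rest on the same two ingredients --- the gradient bounds of Lemma~\ref{lmm_grad_lin} and a second--variation estimate along a long minimizing geodesic --- yet they chain them in opposite orders. The paper applies the second variation \emph{first}, with a tent test function $\phi$, to a minimizing geodesic between an arbitrary pair of points; after integrating by parts and using $\Ric(\dot\gamma,\dot\gamma)=\lambda+\rho\RRR-\nabla_{\dot\gamma}\nabla_{\dot\gamma}f$ with $\rho\RRR\ge 0$, it obtains directly
\[
\max_{[s_0-1,s_0]}|\nabla_{\dot\gamma} f|\;\ge\;\lambda s_0 - c''\,,
\]
and only \emph{then} invokes the upper bound $|\nabla f|^2\le a^\pm f+b^\pm$ of Lemma~\ref{lmm_grad_lin} (in the guise of the Lipschitz function $\sqrt{a^\pm f+b^\pm}$) to convert the linear growth of $|\nabla f|$ into a linear lower bound on $\sqrt{a^\pm f+b^\pm}$ and hence the quadratic bound on $f$. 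You instead begin from the \emph{lower} bound $|\nabla f|^2\ge c^\pm f-d^\pm$, run a one--dimensional dichotomy to show that either $f$ stays bounded or $\sqrt f$ eventually grows linearly, and bring in the second variation (with $\sin^2$ test functions and the geodesic chosen normal to $\Sigma_0$, so that $r(\gamma_k(s))=s$) only to exclude the bounded alternative by contradicting $\lambda T_k/2 = O(1)$.

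What each approach buys: the paper's is a single pass with no case analysis --- the growth of $|\nabla f|$ is obtained unconditionally and immediately translated into growth of $f$. Yours separates the mechanism (``large $|\nabla f|^2\ge c f-d$ forces quadratic growth of $f$'') from the obstruction (``$f$ cannot stay bounded because the averaged Ricci along long geodesics would be too large''), which is conceptually transparent and makes the role of the lower gradient bound explicit; the price is the extra monotonicity/blow--up dichotomy on $\mathcal U$ and the need to argue both signs of $f'$ on the two sides. Two small clean--ups would bring your write--up in line with the paper: note (as the paper does) that $f$ may be shifted so that $\min_M f>0$ before writing $\sqrt f$, and drop the parenthetical that ``no curvature sign or bound is assumed'' --- you do use $\RRR\ge 0$ (through $\rho\RRR\ge 0$) and the boundedness of $\RRR$ enters via Lemma~\ref{lmm_grad_lin}.
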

\begin{proof} From Lemma \ref{lmm_grad_lin}, we have that
$$0\leq |\nabla f|^2\leq a^{\pm}f+b^{\pm}$$
so, considering $g(r)=\sqrt{a^{\pm}f(r)+b^{\pm}}$, we have
\begin{equation}\label{eq_gradlip}|\nabla g(r)|=\dfrac{a^{\pm}|\nabla f|(r)}{2g(r)}\leq\dfrac{a^{\pm}}{2}\;.\end{equation}

Now, let $p,q\in M$ and let $\gamma$ be a minimizing geodesic between them, such that $\gamma(0)=p$ and $\gamma(s_0)=q$, with $s_0=\dist_g(p,q)>2$. We set now
$$
\phi(s) \, = \, \left\{\begin{array}{ll}s,&s\in[0,1]\\
1, &s\in[1, s_0-1]\\s_0-s,&s\in[s_0-1,s_0]\;.
\end{array}\right.
$$
By the second variation formula for the energy of $\gamma$, we have 
\begin{equation}
\label{eq_snd_var}
\int_{0}^{s_0}\phi^2 \, \Ric(\dot{\gamma},\dot{\gamma}) \, ds\,\leq \, (n-1)\int_{0}^{s_0} (\dot\phi)^2 \, ds \, = \, 2n-2\end{equation}
and, by the soliton equation~\eqref{eq_soliton}, we get
$$
\Ric(\dot{\gamma},\dot{\gamma}) \, = \, \lambda|\dot{\gamma}|^2 +\rho\Scal|\dot{\gamma}|^2-\nabla^2f(\dot{\gamma},\dot{\gamma}) \, = \, \lambda+\rho\Scal-\nabla_{\dot{\gamma}}\nabla_{\dot{\gamma}}f\;.
$$
Therefore, we can write
$$
\int_0^{s_0}\phi^2\Ric(\dot{\gamma},\dot{\gamma}) \, ds \, =\, \lambda\int_0^{s_0}\phi^2 \, ds  \, + \,  \rho\int_0^{s_0}\Scal\phi^2 \, ds \, - \, \int_0^{s_0}\phi^2\nabla_{\dot{\gamma}}\nabla_{\dot{\gamma}} f \, ds \, .
$$
Integrating by parts the last term of the right hand side, we get
$$
\int_0^{s_0}\phi^2\nabla_{\dot{\gamma}}\nabla_{\dot{\gamma}} f \, ds  \, = \, -\, 2\int_0^1\phi \, \nabla_{\dot{\gamma}} f\, ds \, + \, 2\int_{s_0-1}^{s_0}\phi \, \nabla_{\dot{\gamma}} f \, ds \, .
$$
The contribution of the interval $[1,s_0-1]$ does not appear, because $\dot\phi=0$ on it. Hence, remembering that $\rho>0$ and $\Scal\geq 0$, the following estimate holds
\begin{eqnarray*}
\lambda\int_0^{s_0}\phi^2ds + \rho\int_0^{s_0}\Scal\phi^2ds-\int_0^{s_0}\phi^2\nabla_{\dot{\gamma}}\nabla_{\dot{\gamma}} f \, ds
& \geq & \lambda s_0-\frac43\lambda + 2\int_0^1\phi \, \nabla_{\dot{\gamma}} f  \, ds - 2\int_{s_0-1}^{s_0}\phi \, \nabla_{\dot{\gamma}} f \, ds \\
& \geq & \lambda s_0-\frac43\lambda - \max_{[0,1]}|\nabla_{\dot{\gamma}} f | - \max_{[s_0-1,s_0]}|\nabla_{\dot{\gamma}} f | \,.
\end{eqnarray*}
Combining this with inequality~\eqref{eq_snd_var}, we infer that
$$
\max_{[s_0-1,s_0]}|  \nabla_{\dot{\gamma}} f   |\geq\lambda s_0 - \frac43\lambda - 2n+2-c' \, ,
$$
where we set $c' = \max_{[0,1]}|\nabla_{\dot{\gamma}} f |$. Therefore, by Lemma \ref{lmm_grad_lin} and \eqref{eq_gradlip}, we obtain
$$
\dfrac{a^{\pm}}{2}+\sqrt{a^{\pm}f(q)+b^{\pm}}\geq\max_{[s_0-1,s_0]}|\nabla_{\dot{\gamma}} f|\geq \lambda s_0-c''=\lambda \dist_g(p,q)-c''\;,
$$
for some positive constant $c''$.


Suppose now that $f$ is not a constant function and let $\Sigma_0$ be as in the statement of the lemma. If we pick the point $p$ in $\Sigma_0$, the triangle inequality implies at once that, for every $q \in M$,
$$
\dfrac{a^{\pm}}{2}+\sqrt{a^{\pm}f(q)+b^{\pm}} \, \geq \, \lambda \, \dist_g(q,\Sigma_0) - c'' \, \geq \, \lambda \, | r (q)| - c'' \, ,
$$
where $r$ is the signed distance to $\Sigma_0$. With the usual abuse of notations, we can write 
$$
\sqrt{a^{\pm}f(r)+b^{\pm}}\, \geq \lambda \, |r| - c''' \, ,
$$ 
where $c'''=c''+\max\{a^+/2, a^-/2\}$.

Squaring this last inequality, we obtain
$$
f(r)\geq\dfrac{1}{a^{\pm}}\left((\lambda |r|-c''')^2-b^{\pm}\right)\;,
$$
which at once implies that $f$ is bounded from below. On the other hand, we observe that such a function is defined up to an additive constant. Thus, from now on, we will always assume $\min_M f  > 0$, without loss of generality; therefore, there exist positive constants $C$ and $D$ such that, for every admissible value of $r$,
$$
f (r) \, \geq \, C \, ( \,|r| - D\,)^2 \, .
$$
This concludes the proof of the lemma. 
\end{proof}

The lower bound on $f$ easily implies the following compact version of Lemma~\ref{lmm_grad_lin}.
\begin{cor}
\label{cor_bounds}
Let $(M^n, g,f)$ be a gradient shrinking $\rho$-Einstein soliton with $\rho>0$, $\RRR\geq 0$ and such that $|\RRR|\leq K$, for some positive constant $K$. Then, either $f$ is constant or there exist positive real constants $a$, $b$, $c$ and $d$, such that 
\begin{eqnarray*}
cf(r) - d \, \leq \, |\nabla f|^2(r) \, \leq \, af(r) + b\;, 
\end{eqnarray*}
where $r$ is the signed distance to a connected component $\Sigma_0\subset M$ of some regular level set of $f$. The constants which appear in the estimate are possibly depending on $\Sigma_0$.
\end{cor}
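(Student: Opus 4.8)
The plan is to obtain the two-sided estimate by simply merging the two one-sided estimates of Lemma~\ref{lmm_grad_lin}; the only ingredient from the rest of this section that is genuinely needed is the lower bound on $f$ from Lemma~\ref{lmm_stima2}, which guarantees that $f$ has a definite sign.

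First I would dispose of the trivial case: if $f$ is constant there is nothing to prove. So assume $f$ is not constant and let $\Sigma_0$ and $r$ be as in Lemmas~\ref{lmm_grad_lin} and~\ref{lmm_stima2}. Since the potential is defined only up to an additive constant, and such a shift affects neither $|\nabla f|^2$, nor $\nabla^2 f$, nor the structural equation~\eqref{eq_soliton}, I would invoke the normalisation already made in the proof of Lemma~\ref{lmm_stima2}, namely $\min_M f>0$; in particular $f(r)\geq 0$ for every admissible value of $r$.

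Then I would apply Lemma~\ref{lmm_grad_lin}, which provides positive constants $a^\pm,b^\pm,c^\pm,d^\pm$ (depending on $\Sigma_0$) such that
\[
c^\pm f(r)-d^\pm\ \leq\ |\nabla f|^2(r)\ \leq\ a^\pm f(r)+b^\pm,
\]
where the superscript $+$ refers to $r\geq 0$ and $-$ to $r\leq 0$. Setting $a:=\max\{a^+,a^-\}$, $b:=\max\{b^+,b^-\}$, $c:=\min\{c^+,c^-\}$ and $d:=\max\{d^+,d^-\}$ — all of which are positive — and using $f(r)\geq 0$ to bound $a^\pm f(r)$ by the larger quantity $af(r)$ and $c^\pm f(r)$ by the smaller quantity $cf(r)$, the two one-sided chains of inequalities collapse into the single estimate $cf(r)-d\leq|\nabla f|^2(r)\leq af(r)+b$, valid for all admissible $r$.

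There is no real obstacle here: the whole analytic content has already been extracted in Lemmas~\ref{lmm_grad_lin} and~\ref{lmm_stima2}, and the one point to keep in mind is that joining the constants coming from the regions $r\geq 0$ and $r\leq 0$ into uniform ones requires a fixed sign for $f$ — precisely what the quadratic lower bound of Lemma~\ref{lmm_stima2} supplies.
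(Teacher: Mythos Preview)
Your proposal is correct and matches the paper's intended argument: the paper gives no explicit proof, remarking only that ``the lower bound on $f$ easily implies the following compact version of Lemma~\ref{lmm_grad_lin}'', which is precisely the merging of the $r\geq 0$ and $r\leq 0$ estimates using $f\geq 0$ that you carry out.
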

We are now in the position to prove the following upper bound for the potential function.
\begin{cor}
\label{cor_stima1}
Let $(M^n, g,f)$ be a gradient shrinking $\rho$-Einstein soliton with $\rho>0$, $\RRR\geq 0$ and such that $|\RRR|\leq K$, for some positive constant $K$. Then, either $f$ is constant on $M$ or there exist positive constants $A$ and $B$, such that 
$$
0\, < \, f (r) \, \leq \, A ( \,|r| +B \,)^2 \, ,
$$
where $r$ is the signed distance to a connected component $\Sigma_0\subset M$ of some regular level set of $f$. The constants which appear in the estimate are possibly depending on $\Sigma_0$.
\end{cor}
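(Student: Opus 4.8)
The plan is to derive the upper bound by integrating the Lipschitz estimate~\eqref{eq_gradlip} -- the one already exploited in the proof of Lemma~\ref{lmm_stima2} -- but now read in the ``expanding'' direction. First I would dispose of the trivial case in which $f$ is constant and, assuming $f$ non-constant, fix a connected component $\Sigma_0$ of a regular level set of $f$ as in Lemma~\ref{lmm_grad_lin}, retaining the normalization $\min_M f>0$ adopted in the proof of Lemma~\ref{lmm_stima2}. By Corollary~\ref{cor_bounds} there are positive constants $a,b$ such that $|\nabla f|^2 \le a f + b$ on all of $M$.

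Next I would introduce the auxiliary function $\psi := \sqrt{a f + b}$, which is smooth and strictly positive since $a f + b \ge b > 0$, and which is constant on $\Sigma_0$, say $\psi\equiv\psi_0$, because $f$ is. From $\nabla\psi = \tfrac{a}{2\psi}\nabla f$ one obtains
\begin{equation*}
|\nabla\psi| \;=\; \frac{a\,|\nabla f|}{2\,\psi} \;\le\; \frac{a}{2}\,\frac{\sqrt{a f+b}}{\psi} \;=\; \frac{a}{2}\,,
\end{equation*}
which is precisely~\eqref{eq_gradlip}. Then, for an arbitrary point $q\in M$ and any $p\in\Sigma_0$, integrating $|\nabla\psi|\le a/2$ along a minimizing geodesic from $p$ to $q$ gives $\psi(q) \le \psi_0 + \tfrac{a}{2}\dist_g(p,q)$; taking the infimum over $p\in\Sigma_0$ then yields $\psi(q) \le \psi_0 + \tfrac a2\,|r(q)|$, where $r$ is the signed distance to $\Sigma_0$.

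Finally I would square this inequality: since $\psi^2 = a f + b \ge a f$, it follows that $f(q) \le \tfrac1a\big(\psi_0 + \tfrac a2|r(q)|\big)^2 = \tfrac a4\big(|r(q)| + \tfrac{2\psi_0}{a}\big)^2$, so the claim holds with $A = a/4$ and $B = 2\psi_0/a$, both positive and depending on $\Sigma_0$ only through $a$, $b$ and the value of $f$ on $\Sigma_0$; the strict positivity $0<f(r)$ is the chosen normalization. I do not expect any genuine obstacle: the estimate is essentially the mirror image of the argument in Lemma~\ref{lmm_stima2}, and the only point requiring a little care is to reason along minimizing geodesics issuing from $\Sigma_0$, so as to sidestep any regularity issue of the distance function $r$ at the cut locus of $\Sigma_0$.
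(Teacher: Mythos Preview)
Your proof is correct and follows essentially the same route as the paper: both arguments invoke Corollary~\ref{cor_bounds} to bound $|\nabla f|^2\le af+b$, pass to a square-root auxiliary function, observe it is globally Lipschitz, and then square back. The only cosmetic difference is that the paper works with $\sqrt{f}$ (using $\min_M f>0$ to bound $|\nabla\sqrt{f}|$) while you work with $\psi=\sqrt{af+b}$, reproducing exactly the estimate~\eqref{eq_gradlip}; the two choices yield the same conclusion with equally harmless constants.
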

\begin{proof} If $f$ is not constant, by Corollary \ref{cor_bounds}, we have that
$$
\big|\nabla\sqrt{f} \big|^2 (r)  \, = \, \frac{1}{4}\frac{|\nabla f|^2 (r)}{f(r)} \, \leq \, \frac{1}{4}\left(a+\frac{b}{f(r)}\right) \,.
$$
Since we are assuming $\min f>0$, we deduce that $\sqrt{f}$ is a Lipschitz function. The conclusion follows at once.
\end{proof}

We conclude this section with the following proposition, which summarizes the results of Lemma \ref{lmm_stima2} and Corollary \ref{cor_stima1}.
\begin{prop}
\label{prp_quadratic}
Let $(M^n, g,f)$ be a gradient shrinking $\rho$-Einstein soliton with $\rho>0$, $\RRR\geq 0$ and such that $|\RRR|\leq K$, for some positive constant $K$. Then, either $f$ is constant on $M$ or there exist positive constants $A,B,C$ and $D$, such that 
$$
C\, (\,|r| - D \,)^2 \, \leq \, f (r) \, \leq \, A ( \,|r| + B \,)^2 \, ,
$$
where $r$ is the signed distance to a connected component $\Sigma_0\subset M$ of some regular level set of $f$. The constants which appear in the estimate are possibly depending on $\Sigma_0$.
\end{prop}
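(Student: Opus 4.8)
The plan is simply to assemble the two-sided bound from the pieces already established in this section, so the ``proof'' is little more than bookkeeping. Assume first that $f$ is not constant; otherwise there is nothing to prove. By the rectifiability discussion following Theorem~\ref{teorect}, there is a point $p_0$ with $|\nabla f|(p_0)>0$, and I let $\Sigma_0$ be the connected component of $\{f=f(p_0)\}$ through $p_0$; as recalled above, $\Sigma_0$ is then a regular level set, and $f$, $\RRR$ and $|\nabla f|$ all descend to functions of the signed distance $r$ to $\Sigma_0$ on the whole of $M$ (here the real analyticity of $f$, hence the non-accumulation of critical level sets, is what upgrades the {\em a priori} local statement to a global one).

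First I would invoke Lemma~\ref{lmm_stima2} with this choice of $\Sigma_0$: after the harmless normalization $\min_M f>0$, it produces positive constants $C,D$ with $f(r)\geq C(|r|-D)^2$ for every admissible $r$. Then I would invoke Corollary~\ref{cor_stima1} --- which rests on Corollary~\ref{cor_bounds}, and hence on Lemma~\ref{lmm_grad_lin} and Lemma~\ref{lmm_stima2}, and which uses the very same normalization $\min_M f>0$ --- to obtain positive constants $A,B$ with $0<f(r)\leq A(|r|+B)^2$. Concatenating the two inequalities gives exactly the claimed estimate $C(|r|-D)^2\leq f(r)\leq A(|r|+B)^2$.

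The only point deserving a word of care is consistency: both estimates must refer to the same hypersurface $\Sigma_0$ and to the same additive normalization of $f$. This is automatic, since Corollary~\ref{cor_stima1} is deduced from Corollary~\ref{cor_bounds}, which in turn already presupposes the lower bound of Lemma~\ref{lmm_stima2} --- that is precisely where the boundedness of $f$ from below, and hence the admissibility of the choice $\min_M f>0$, are used. Consequently there is no genuine obstacle: the proposition is a direct corollary of Lemma~\ref{lmm_stima2} and Corollary~\ref{cor_stima1}, recorded separately only because the quadratic control of $f$ is the form in which the growth estimate will be applied in the remainder of the paper.
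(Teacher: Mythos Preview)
Your proposal is correct and matches the paper's approach exactly: the paper does not even give a formal proof of Proposition~\ref{prp_quadratic}, stating only that it ``summarizes the results of Lemma~\ref{lmm_stima2} and Corollary~\ref{cor_stima1}.'' Your careful remark about using the same $\Sigma_0$ and the same additive normalization $\min_M f>0$ in both ingredients is a welcome explicitation of a point the paper leaves implicit.
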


\

\section{Proof of Theorem~\ref{main}}

The aim of this section is to show that, under the assumption of Theorem~\ref{main}, the scalar curvature is a constant function. Since, by Theorem~\ref{teo_analytic}, the soliton metrics are real analytic for $\rho \notin \{1/n,1/2(n-1)\}$, it is sufficient to prove that $\RRR$ is constant on some open set. As we will see at the end of the section, this will imply Theorem~\ref{main}.   

From now on, we will assume that $(M^n,g,f)$ is a complete, non compact, gradient shrinking $\rho$-Einstein soliton with $0<\rho< 1/2(n-1)$, bounded curvature, nonnegative radial sectional curvature, and nonnegative Ricci curvature. 
We observe that under these assumptions, if the potential function $f$ were constant, then, by equation~\eqref{eq_soliton} and the Bonnet-Myers Theorem, the manifold would be compact, which is excluded. Hence, there has to exist a regular level set of $f$. Reasoning as in the previous section, we let $\Sigma_0 \subset M$ be a regular connected component of this level set and we have that $f$ only depends on the signed distance $r$ to $\Sigma_0$ on the whole manifold. With a small abuse of notation, we will consider $f$ as a function of $r$ and we will indicate by $f',f'',\ldots$ the derivatives of $f$ with respect to $r$.
As a consequence, we can express the gradient and the Hessian of $f$ as
\begin{eqnarray*}
\nabla f \,  =  \, f' \nabla r \, \quad & \hbox{and}& \quad
\nabla^2f  =  f'\nabla\nabla r + f'' dr \otimes dr\;.
\end{eqnarray*}

We observe that the signed distance $r$ must be unbounded on $M$. In fact, if this were not the case, by Proposition~\ref{prp_quadratic}, we would have that $f$ is bounded too. On the other hand, the Bakry-Emery Ricci tensor $\Ric + \nabla^2 f$ is bounded from below by $\lambda g$ and  this would imply that $M$ is compact, by~\cite[Theorem 1.4]{weywyl}.

As a first step we are going to prove that $f$ is a convex function on a set of the form $\{|r| \geq r_0\}$, for some $r_0>0$. Following Petersen-Wylie~\cite{pw2}, we are going to estimate the two terms of the Hessian separately. We start with the following lemma. 
%

\begin{lemma} 
\label{lmm_hessr}
Let $(M^n, g)$ be a complete noncompact Riemannian manifold
and let $\Sigma_0 \subset M$ be a regular hypersurface. We
denote by $r: M^n \rightarrow \RR$ the signed distance to $\Sigma_0$. If $\Rm(E, \nabla r , E , \nabla r) \geq 0$ for every $E\in T_p M$ which is orthogonal to $\nabla r$, then the following holds.
\begin{enumerate}
\item If $r$ is not bounded from above, then $\nabla^2 r$ is positive semi-definite in the region $\{ r > 0 \}$.
\item If $r$ is not bounded from below, then $\nabla^2 r$ is negative semi-definite in the region $\{r < 0 \}$.
\end{enumerate}
\end{lemma}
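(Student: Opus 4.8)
The plan is to run the classical Riccati comparison along the integral curves of $\nabla r$, in the spirit of Petersen--Wylie. I will treat (1); statement (2) then follows at once by applying (1) to $-r$, which is the signed distance to $\Sigma_0$ for the reversed orientation, satisfies $\Rm(E,\nabla(-r),E,\nabla(-r))=\Rm(E,\nabla r,E,\nabla r)\ge 0$, and is unbounded above exactly when $r$ is unbounded below, with $\{-r>0\}=\{r<0\}$.

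First I would reduce to the level sets. Since $r$ is a distance function, $|\nabla r|\equiv 1$, so $\nabla^2 r(\nabla r,\cdot)=\tfrac12\nabla|\nabla r|^2=0$ and it suffices to prove $\nabla^2 r(E,E)\ge 0$ for every unit $E\perp\nabla r$ at every point of $\{r>0\}$. Fix such a point $p$, set $a:=r(p)>0$ and a unit $E_0\perp\nabla r(p)$, and let $\gamma$ be the unit-speed integral curve of $\nabla r$ with $\gamma(a)=p$. Because $\tfrac{d}{ds}(r\circ\gamma)=|\nabla r|^2=1$, one has $r(\gamma(s))=s$; hence $\gamma$ is a geodesic meeting $\Sigma_0$ orthogonally at $\gamma(0)$, $\gamma|_{[0,s]}$ realizes $\dist(\gamma(s),\Sigma_0)=s$, and $\gamma(s)\in\{r>0\}$ for all $s>0$. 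Here the hypothesis that $r$ is unbounded above enters: it is what lets me continue $\gamma$ forward for all time as a genuine minimizing, distance-realizing geodesic (equivalently, with $r$ smooth along it).

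Next comes the Riccati step. Let $S(s)$ be the self-adjoint endomorphism of $\dot\gamma(s)^\perp$ induced by $\nabla^2 r$ at $\gamma(s)$, i.e.\ the shape operator of the level set $\{r=s\}$ relative to $\nabla r$. Differentiating $\nabla_{\,\cdot\,}\nabla r$ along $\dot\gamma$ and using $\nabla_{\dot\gamma}\nabla r=0$ together with the Ricci identity yields the matrix Riccati equation
\[
\nabla_{\dot\gamma}S \;=\; -\,S^2\;-\;\mathcal R, \qquad \langle\mathcal R(s)E,E\rangle=\Rm\big(E,\dot\gamma(s),E,\dot\gamma(s)\big),
\]
where the right-hand side of the second equation is the radial sectional curvature in the direction $E$, which is $\ge 0$ by hypothesis; thus $\mathcal R(s)\ge 0$ as a symmetric form. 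Suppose, for contradiction, that $\nabla^2 r(E_0,E_0)(p)=-c<0$. Let $V(s)$ be the parallel unit field along $\gamma$ with $V(a)=E_0$ and put $\varphi(s):=\langle S(s)V(s),V(s)\rangle$, so $\varphi(a)=-c$. Using $\mathcal R\ge 0$ and the Cauchy--Schwarz inequality $\langle S^2V,V\rangle=|SV|^2\ge\langle SV,V\rangle^2=\varphi^2$,
\[
\varphi'(s)\;=\;-\langle S^2V,V\rangle-\langle\mathcal RV,V\rangle\;\le\;-\,\varphi(s)^2 .
\]
Comparing with the solution $\psi(s)=\big((s-a)-1/c\big)^{-1}$ of $\psi'=-\psi^2$, $\psi(a)=-c$, which decreases to $-\infty$ as $s\uparrow a+1/c$, a Gronwall-type argument gives $\varphi\le\psi$, hence $\varphi(s)\to-\infty$ at some $s_0\le a+1/c$. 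Then $S(s)$ blows up as $s\uparrow s_0$, i.e.\ $\gamma$ has a focal point of $\Sigma_0$ at parameter $s_0$ --- impossible, since $\gamma|_{[0,s_0]}$ is a minimizing geodesic from $\Sigma_0$ to $\gamma(s_0)$ and a minimizing geodesic carries no interior focal point. This contradiction forces $\nabla^2 r(E_0,E_0)(p)\ge 0$, which is (1).

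I expect the one genuinely delicate point to be exactly the one flagged above: the claim that $\gamma$ remains minimizing and distance-realizing (equivalently, $r$ stays smooth along it) for all forward parameters up to the blow-up time $s_0$. Past a cut point of $\Sigma_0$ the identity $r(\gamma(s))=s$ would fail and the Riccati estimate along $\gamma$ would lose its geometric meaning, so the argument hinges on ruling this out. This is precisely the role of the hypothesis that $r$ be unbounded above (respectively, unbounded below for (2)): it forces the radial geodesic issuing from any point of $\{r>0\}$ to travel arbitrarily far inside $\{r>0\}$ as an honest distance function, leaving $\nabla^2 r$ no way to escape the blow-up imposed by $\varphi'\le-\varphi^2$. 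If one prefers not to presuppose this regularity, the cut-locus points can be handled in the standard way via Calabi's trick, replacing $r$ near such a point by a smooth upper support function and running the same Riccati estimate on it.
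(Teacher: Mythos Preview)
Your argument is correct and follows essentially the same Riccati comparison as the paper: both derive $\nabla_{\nabla r}S=-S^2-\mathcal R$ with $\mathcal R\ge 0$, extract a scalar differential inequality $\varphi'\le-\varphi^2$ along an integral curve of $\nabla r$, and use the resulting finite-time blow-up to contradict the unboundedness of $r$. The only cosmetic difference is that the paper tracks the smallest eigenvalue $\mu$ of $S$ (and therefore has to argue about the almost-everywhere differentiability of $\mu$), whereas you track $\langle SV,V\rangle$ for a fixed parallel unit field $V$ and invoke $\langle S^2V,V\rangle\ge\langle SV,V\rangle^2$; your variant is slightly cleaner since $\varphi$ is smooth, but the substance is identical, and you correctly flag the same cut-locus caveat that the paper leaves largely implicit.
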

\begin{proof} 
We present the proof only in the first case, since the second one will follow by trivial adaptations. Let us set $S=\nabla^2 r$. As $|\nabla r|=1$, then
$$
0=\nabla_i(\nabla_j r \nabla_j r)=2\nabla_i\nabla_j r \nabla_j r=2S_{ij}\nabla_j r \, ,
$$
which implies
$$
0=\nabla_k(S_{ij}\nabla_j r)=\nabla_k S_{ij}\nabla_j r+S_{ij}S_{jk}\;.
$$
On the other hand it holds 
$$
\nabla_k S_{ij}-\nabla_i S_{kj}=(\nabla_k\nabla_i - \nabla_i\nabla_k)\nabla_jr \, =\, R_{kijl}\nabla_l r\;.
$$
Combining these identities, we get
\begin{equation}
\label{noidea}
\nabla_k S_{ij}\nabla_k r=\nabla_i S_{kj}\nabla_k r + R_{kijl}\nabla_lr\nabla_k r=-S_{jk}S_{ki}-\RRR_{ikjl}\nabla_lr\nabla_k r\;.
\end{equation}

We now let $\mu$ be the smallest eigenvalue of $S$. It is well known that $\mu$ is an absolutely continuous function. Therefore, it is weakly differentiable, its derivative is locally integrable and the integral along any curve of the derivative coincides almost everywhere with $\mu$. Moreover, it is differentiable almost everywhere. We want to compute $\langle \nabla\mu \,| \,\nabla r \rangle$, at a point $p$ where $\mu$ and $r$ are differentiable. We recall that the distance function to a submanifold is a Lipschitz function. In particular $r$ is absolutely continuous and differentiable almost everywhere. 
%
For $\eps>0$ sufficiently small, we let then $\gamma : (-\eps , \eps) \rightarrow M$ be an integral curve of $\nabla r$ with $\gamma(0)=p$, and we introduce the map 
$$
u \,: \, S_pM \times (-\eps, \eps) \, \longrightarrow \, \RR \, , \quad \quad (X,t) \, \longmapsto \, u(X,t) := S_{\gamma(t)}(\tilde{X}(t),  \tilde{X}(t) ) \, ,
$$ 
where $S_pM := \{ X \in T_p M \,\,: \,\, |X|^2 = 1 \, \}$ and $t \mapsto \tilde{X}(t)$ is the parallel transport of $X$ along $\gamma$, with the initial condition $\tilde{X}(0)=X$. Since, for every $t\in (-\eps, \eps)$, the parallel transport yields an isometry between $T_p M$ and $T_{\gamma(t)}M$, we have that $|\tilde{X}(t)|^2 \equiv 1$. It follows that 
$$
(\mu \circ \gamma) (t) \, = \, u_{\min} (t) \, : = \, \min_{X\in S_pM} \, u(X,t) \, .
$$
We observe that, with these definitions, one has $\langle \nabla\mu | \nabla r \rangle_p \, = \, \frac{d}{dt}\big|_{t=0} (\mu \circ \gamma ) \, = \,  \frac{d}{dt}\big|_{t=0} u_{\min}$. We claim that 
$$
\frac{d}{dt}{\Big|_{t=0}} u_{\min} \, = \, \frac{\partial u}{\partial t} \,(Y, 0) \, ,
$$
where $Y \in S_pM$ is such that $u\, (Y, 0) = u_{\min}(0)$. By Lagrange's Theorem, we have that for every $0<h<\eps$ there exists $\xi \in (0, h)$ such that 
$$
u_{\min}(h) \, \leq \, u\, (Y, h) \, = \, u\, (Y , 0) \, + \, h \, \frac{\partial u}{\partial t} \, (Y, \xi) \, = \, u_{\min}(0) \, + \,  h \, \frac{\partial u}{\partial t} \, (Y, \xi)\, .
$$
Dividing by $h$, subtracting $u_{\min}(0)$ from both sides and letting $h$ tend to zero, we obtain
$$
\frac{d}{dt}{\Big|_{t=0}} u_{\min} \, \leq \, \frac{\partial u}{\partial t} \,(Y, 0) \, ,
$$
since $\mu$ was differentiable at $p$. The other inequality is analogous and it is left to the reader. 

Having the claim at hand, we let $t \mapsto \tilde{Y}(t)$ be the parallel transport of $Y$ along $\gamma$ and we compute
$$
\langle \nabla\mu | \nabla r \rangle_p  \, = \, \frac{\partial u}{\partial t} \,(Y, 0) \, = \, \frac{d}{dt}{\Big|_{t=0}} S_{\gamma(\cdot)} \, (\tilde{Y} (\cdot), \tilde{Y} (\cdot) ) \, = \, \nabla_{\dot{\gamma}(0)} S \, (Y, Y) \,=\, \nabla_{\nabla r}  S \, (Y, Y)  \,.
$$
Using~\eqref{noidea}, we finally obtain that, at every point $p \in \{ r>0\}$ where $\mu$ and $r$ are differentiable, it holds 
$$
\langle \nabla \mu \vert\nabla r\rangle \, = \, \nabla_{\nabla r}  S \, (Y, Y)  \, = \, 
-S_{jk}S_{ki}Y_jY_i - \RRR_{ikjl}\nabla_lr \nabla_kr Y_i Y_j \, = \, -\mu^2|X|^2 - \RRR(Y,\nabla r, Y , \nabla r) \, .
$$




Since we are assuming that $\RRR(E, \nabla r , E , \nabla r) \geq 0$ for every $E\in T_p M$ which is orthogonal to $\nabla r$, we deduce, by the symmetries of the Riemann tensor, that $
\langle \nabla \mu\vert\nabla r\rangle \, \leq \,  -\mu^2$.

To complete the proof, we assume by contradiction that there exists $p_0 \in \{ r>0 \}$ such that $\mu(p_0) <0$ and we let $\a : [0, +\infty) \rightarrow M$ be an integral curve of $\nabla r$ with $\a(0)=p_0$. Notice that $\a$ is defined for every $t \geq 0$ because we are supposing that $r$ is not bounded from above (the variable $t$ differs by $r$ just by an additive constant, namely the distance between $p_0$ and $\Sigma_0$). By the absolute continuity of $\mu$, we have that $\mu(t)<0$, for every $t\geq 0$, since
$$
(\mu \circ \a) (t) \, \leq \, (\mu \circ \a) (0) \, - \int_{0}^t (\mu \circ \a)^2(s) \, ds \, .
$$
Hence, setting $w (t):= - 1/(\mu \circ \a)(t) >0$, we have that $\frac{d}{dt} w  \leq -1$, for almost every $t \geq 0$. Integrating from $0$ to $t$, we get 
$w(t) \, \leq \,w(0) \, - \, t$, which leads us to a contradiction, for large $t$. This completes the proof of the lemma.
%
\end{proof}

In the next proposition, we are going to prove that $f$ is convex at infinity.

\begin{prop}
\label{prp_fconvex} 
Let $(M^n,g,f)$ be a complete, non compact, gradient shrinking $\rho$-Einstein soliton with $0<\rho< 1/2(n-1)$, bounded curvature, nonnegative radial sectional curvature, and nonnegative Ricci curvature. Let $\Sigma_0 \subset M$ be a connected component of a regular level set of $f$ and let $r: M^n \rightarrow \RR$ be the signed distance to $\Sigma_0$. Then, the following holds.
\begin{enumerate}
\item If $r$ is not bounded from above, then there exists $r_0>0$, such that $\nabla^2 f$ is positive semi-definite in the region $\{ r \geq r_0 \}$.
\item If $r$ is not bounded from below, then there exists $r_0>0$, such that $\nabla^2 f$ is positive semi-definite in the region $\{r \leq -r_0 \}$. 
\end{enumerate}
\end{prop}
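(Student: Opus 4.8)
The plan is to follow the scheme of~\cite{pw2} and estimate separately the two summands in the decomposition $\nabla^2 f = f'\,\nabla\nabla r + f''\,dr\otimes dr$. I would carry out case~(1), where $r$ is unbounded from above; case~(2) is identical after replacing $r$ by $-r$. The tangential term $f'\,\nabla\nabla r$ is the easy one. Since $\nabla f = f'\nabla r$, the hypothesis of nonnegative radial sectional curvature says exactly that $\Rm(E,\nabla r,E,\nabla r)\geq 0$ for $E\perp\nabla r$, so Lemma~\ref{lmm_hessr} gives that $\nabla\nabla r$ is positive semi-definite on all of $\{r>0\}$. Moreover $f'>0$ on some half-line $\{r\geq r_0\}$: by Corollary~\ref{cor_bounds} one has $(f')^2=|\nabla f|^2\geq c\,f-d$, and by Proposition~\ref{prp_quadratic} the right-hand side tends to $+\infty$ as $r\to+\infty$, so $f'$ is eventually nonvanishing, hence of constant sign, the sign being positive because $f(r)\to+\infty$. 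Thus $f'\,\nabla\nabla r\geq 0$ on $\{r\geq r_0\}$, and it remains only to prove that $f''\geq 0$ on such a region.

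By~\eqref{eq_soliton} evaluated on the unit vector $\nabla r$, the inequality $f''\geq 0$ is equivalent to $\varphi\leq\rho\RRR+\lambda$, where I set $\varphi:=\Ric(\nabla r,\nabla r)$. Since $\RRR$, and hence $\varphi$, depends only on $r$, equations~\eqref{equa2} and~\eqref{equa3} turn into the radial identities $\beta\,\RRR'=2f'\varphi$ and $\beta\,(\RRR''+\RRR'\Delta r)=f'\RRR'+2(\rho\RRR^2-|\Ric|^2+\lambda\RRR)$, with $\beta:=1-2(n-1)\rho>0$; eliminating $\RRR'$ and $\RRR''$ between them produces the scalar ODE
$$
f'\varphi' \,=\, \frac{(f')^2}{\beta}\,\varphi \,-\, f''\varphi \,-\, f'\,\varphi\,\Delta r \,+\, \rho\RRR^2-|\Ric|^2+\lambda\RRR\,,
$$
understood along the integral curves of $\nabla f$, where $r$ is smooth. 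On $\{r\geq r_0\}$ every term on the right other than $\tfrac{(f')^2}{\beta}\varphi$ is bounded: $\varphi$ and $|\Ric|^2$ by the curvature bound; $0\leq\varphi\leq\RRR$ and $|\Ric|^2\leq\RRR^2$ because $\Ric\geq 0$; while $f''=\rho\RRR+\lambda-\varphi$ is bounded (above by $\Delta f=(n\rho-1)\RRR+n\lambda\leq n\lambda$, using~\eqref{equa1} and $f'\Delta r\geq 0$, and below by $(\rho-1)\RRR+\lambda$), and $0\leq f'\Delta r\leq(n-1)(\rho\RRR+\lambda)$ by testing~\eqref{eq_soliton} against the principal directions of $\nabla\nabla r$ together with $\Ric\geq 0$. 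Hence there is a constant $C>0$ with $f'\varphi'\geq\tfrac{(f')^2}{\beta}\varphi-C$ on $\{r\geq r_0\}$, equivalently $\bigl(\varphi\,e^{-f/\beta}\bigr)'\geq-\tfrac{C}{f'}\,e^{-f/\beta}$.

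The last step is a maximum-principle argument on this differential inequality. By the quadratic lower bound of Proposition~\ref{prp_quadratic}, the integral $I(\bar r):=\int_{\bar r}^{\infty}f'(s)^{-1}e^{-f(s)/\beta}\,ds$ is finite and $I(\bar r)\to 0$ as $\bar r\to+\infty$. Integrating the inequality from $\bar r$ to $r\geq\bar r$ gives $\varphi(r)e^{-f(r)/\beta}\geq\varphi(\bar r)e^{-f(\bar r)/\beta}-C\,I(\bar r)$; if it were the case that $\varphi(\bar r)e^{-f(\bar r)/\beta}>C\,I(\bar r)$ for some large $\bar r$, then $\varphi(r)e^{-f(r)/\beta}$ would stay bounded below by a positive constant, forcing $\varphi(r)\to+\infty$ and contradicting the boundedness of the curvature. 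Therefore $\varphi(\bar r)\leq C\,e^{f(\bar r)/\beta}I(\bar r)$, and since $f(s)-f(\bar r)\geq\sqrt{c\,f(\bar r)-d}\,(s-\bar r)$ for $s\geq\bar r$ (by the monotonicity of $f$ and Corollary~\ref{cor_bounds}), this yields $\varphi(\bar r)=O(1/f(\bar r))\to 0$. In particular $\varphi<\lambda\leq\rho\RRR+\lambda$ for $r$ large, so $f''>0$ there; together with $f'>0$ and $\nabla\nabla r\geq 0$ this gives $\nabla^2 f\geq 0$ on a region $\{r\geq r_0\}$, which is assertion~(1). Assertion~(2) follows by the same computation with $-r$ in place of $r$.

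The hard part is precisely this last block. As already noted in connection with Lemma~\ref{lmm_grad_lin}, the obstruction compared with the Ricci-soliton case in~\cite{pw2} is the lack of Hamilton's identity $\RRR+|\nabla f|^2=2\lambda f+\mathrm{const}$, which there makes the radial behaviour of $f''$ and of $\Ric(\nabla r,\nabla r)$ transparent; here its role must be played by the interplay of~\eqref{equa1}--\eqref{equa3}, the curvature bound, and the quadratic growth of $f$ from Proposition~\ref{prp_quadratic}. Deriving the scalar ODE for $\varphi$ and extracting from it the decay $\varphi\to 0$ is the technical core of the argument.
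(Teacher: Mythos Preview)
Your argument is correct and follows the same overall scheme as the paper: decompose $\nabla^2 f=f'\,\nabla\nabla r+f''\,dr\otimes dr$, treat the tangential piece via Lemma~\ref{lmm_hessr} together with $f'>0$ for large $r$, and then show $\varphi:=\Ric(\nabla r,\nabla r)\to 0$ so that $f''=\rho\RRR+\lambda-\varphi>0$ eventually. The difference lies entirely in how the vanishing of $\varphi$ is obtained.

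The paper argues by contradiction: writing $\varphi=\tfrac{\beta}{2}\,\RRR'/f'$ with $\beta=1-2(n-1)\rho$, it supposes $\varlimsup \RRR'/f'=\delta>0$ while $\varliminf \RRR'/f'=0$ (forced by $\RRR$ bounded and $\RRR'\geq 0$), extracts a sequence $r_i\to\infty$ of interior local maxima of $\RRR'/f'$, and evaluates the radial form of~\eqref{equa3} there; since $(\RRR'/f')'(r_i)=0$ and $\Delta r$, $f''$, $|\Ric|^2$ are bounded, one obtains $0\geq (\delta/2)\,f'(r_i)-\mathrm{const}$, contradicting $f'(r_i)\to\infty$. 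Your route instead combines~\eqref{equa2} and~\eqref{equa3} into the single scalar inequality $f'\varphi'\geq \beta^{-1}(f')^2\varphi-C$, multiplies by the integrating factor $e^{-f/\beta}$, and integrates, which yields the quantitative decay $\varphi=O(1/f)$. Both proofs use exactly the same ingredients (the bounds on $f''$, on $f'\Delta r$, and on $|\Ric|^2$ coming from $\Ric\geq 0$ and bounded curvature, plus the growth estimates of Section~3). Your approach has the advantage of being direct and giving an explicit rate, while the paper's contradiction via local maxima avoids having to set up and estimate the integral $I(\bar r)$.
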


\begin{proof} 
We present the proof only in the first case, since the second one will follow by trivial adaptations.
By equations~\eqref{eq_soliton} and the expression of the Hessian of $f$, we have
$$
f'' \, = \, -\Ric(\nabla r,\nabla r) +\lambda +\rho \Scal\;.$$
We claim that $\Ric(\nabla r,\nabla r) \rightarrow 0$, as $r\rightarrow + \infty$. By Corollary~\ref{cor_bounds} and Proposition~\ref{prp_quadratic}, we have that $|\nabla f|^2 = (f')^2 \rightarrow + \infty$, as $r \rightarrow + \infty$. Thus, $f'(r)$ has a definite sign, provided $r$ is large enough. Again by Proposition~\ref{prp_quadratic}, we deduce that $f'(r)>0$, for large enough $r$. Thus, by~\eqref{equa2} in Lemma~\ref{lemg}, we get
$$
0\, \leq\, \Ric(\nabla r, \nabla r) \, = \, \frac{\Ric(\nabla f, \nabla f)}{(f')^2} \, = \, \frac{(1-2(n-1)\rho)}{2}\frac{\langle \nabla \Scal \, \vert \, \nabla f\rangle}{(f')^2} \, = \, \frac{(1-2(n-1)\rho)}{2} \, \frac{\RRR'}{f'}\, ,
$$
for $r$ large enough. To prove the claim, we assume by contradiction that 
$
\varlimsup_{r\rightarrow +\infty} (\RRR'/f') = \delta \, ,
$ for some $\delta>0$. On the other hand, we have that $\varliminf_{r\rightarrow + \infty}  (\RRR'/f')= 0$, since $\RRR$ is bounded and $\RRR' \geq 0$. In particular, there exist two sequences $(\hat{r}_k)_{k\in\mathbb{N}}$ and $(\check{r}_j)_{j\in \mathbb{N}}$ tending to infinity for $k,j \to +\infty$, such that 
$$
\lim_{k\to+\infty} (\Scal'/f') (\hat{r}_k) \, = \, \delta \,\,\quad \quad \hbox{and} \quad \quad\,\, \lim_{j\to+\infty} (\RRR'/f')(\check{r}_j) \, = \, 0 \, .
$$
Without loss of generality, we can assume that $(\RRR '/f')(\hat{r}_k) \, >\,\delta/2$ and  $(\RRR '/f')(\check{r}_j) \, <\,\delta/2$, for every $k,j\in \mathbb{N}$. We consider the following construction. We pick an element of the second sequence and we call it $\check{r}_{j_1}$. We then set ${k_1} := \min\,  \{ k \in \mathbb{N}  \, : \, \hat{r}_k \geq \check{r}_{j_1}  \}$. Then, by induction, we define
$j_i := \min\,  \{ j \in \mathbb{N}  \, : \, \check{r}_j \geq \hat{r}_{k_{i-1}}  \}$ and $k_i := \min\,  \{ k \in \mathbb{N}  \, : \, \hat{r}_k \geq \check{r}_{j_{i}}  \}$, for every $i\geq 1$. To fix the ideas, we observe that by construction one has that 
$\check{r}_{j_1} < \hat{r}_{k_1} < \check{r}_{j_2} < \hat{r}_{k_2} < \ldots$ and so on. It is now immediate to deduce that the function $(\Scal '/f')$ must attain a local interior maximum between $\check{r}_{j_i}$ and $\check{r}_{j_{i+1}}$, for every $i\in\mathbb{N}$. We then let $r_i$ be an interior maximum point for $(\Scal'/f')$ in $[\, \check{r}_{j_{i}} \, , \, \check{r}_{j_{i+1}}]$. Hence, we have obtained a sequence $(r_i)_{i\in\mathbb{N}}$ which tends to infinity, as $i\to +\infty$ and such that
$$
\lim_{i\to+\infty} (\Scal'/f') ({r}_i) \, = \, \delta \,\,\quad \quad \hbox{and} \quad \quad\,\, 0 \, = \, (\Scal'/f')' (r_i)\,= \, [ (\RRR''/f')  - (\RRR'/f')(f''/f')]\, (r_i) \, ,
$$
for every $i\in\mathbb{N}$. To find a contradiction, we are going to use equation~\eqref{equa3} in Lemma~\ref{lemg}, which in virtue of the rectifiability reads
$$
(1-2(n-1)\rho) \, [\,\RRR'' + \RRR' \Delta r\, ] \, = \,  \Scal' f' - 2|\Ric|^2+2\rho\Scal^2+2\lambda \Scal\;.
$$
As $\Ric \geq 0$, we have that $|\Ric|^2 \leq \RRR^2$. Therefore, since $\RRR$ is bounded, we have that $f''$ is bounded as well. By~\eqref{equa1} in Lemma~\ref{lemg} and the identity $\Delta f \, = \, f'' + f' \Delta r$, we deduce that $\Delta r \leq C$, for some positive constant $C>0$. Combining all these observations, we obtain that there exists a constant $K>0$ such that, at the $r_i$'s, we have
\begin{eqnarray*} 
0 &  = & (1-2(n-1)\rho) \, [\,  (\RRR''/f')  - (\RRR'/f')(f''/f') \,]  \\
& \geq & [\, f' - (1-2(n-1)\rho) \, C  - (1-2(n-1)\rho) (f''/f')\, ] \, (\RRR'/f') \, - \, (K/f') \\
& \geq & [\, f' - K - (K/f') \,  ] \, (\delta/2) \, - \, (K/f') \,.
\end{eqnarray*}
This contradicts the fact that $f'(r_i)\to +\infty$, for $i \to +\infty$ and the claim is proven. As a consequence, we have that $f''>0$, for $r$ large enough. Combining this with Lemma~\ref{lmm_hessr}, it is easy to deduce the statement of the proposition.
\end{proof}

We employ now the previous proposition to show that the scalar curvature is $f$-subharmonic at infinity. From this we deduce that $\RRR$ is actually constant on some open set. Hence, by analyticity, it must be constant everywhere. 

\begin{prop} 
\label{rconst}
Let $(M^n,g,f)$ be a complete, non compact, gradient shrinking $\rho$-Einstein soliton with $0<\rho< 1/2(n-1)$, bounded curvature, nonnegative radial sectional curvature, and nonnegative Ricci curvature. Then the scalar curvature $\Scal$ is constant.
\end{prop}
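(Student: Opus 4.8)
The plan is to prove that $\Scal$ is constant on a nonempty open subset of $M$; since $\rho\in(0,1/2(n-1))$ avoids the excluded values $1/n$ and $1/2(n-1)$, Theorem~\ref{teo_analytic} then forces $\Scal$ to be constant everywhere. The potential $f$ cannot be constant, otherwise~\eqref{eq_soliton} and Bonnet--Myers would make $M$ compact; hence, by Theorems~\ref{teorect} and~\ref{teo_analytic}, the functions $\Scal$, $f$, $|\nabla f|$ depend only on the signed distance $r$ to a regular connected component $\Sigma_0$ of a level set of $f$, on the whole of $M$, and $r$ is unbounded --- say from above, the case ``unbounded from below'' being symmetric via part~(2) of Proposition~\ref{prp_fconvex} and of Lemma~\ref{lmm_hessr}.

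Next I would collect the behaviour on $\{r\ge r_0\}$ provided by the earlier results: $f''\ge0$ (Proposition~\ref{prp_fconvex}); $f'(r)\to+\infty$ (Proposition~\ref{prp_quadratic} and Corollary~\ref{cor_bounds}); $\Scal'=\tfrac{2}{1-2(n-1)\rho}\,f'\,\Ric(\nabla r,\nabla r)\ge0$ by~\eqref{equa2} and $\Ric\ge0$, so that $\Scal$ is bounded and nondecreasing and $\Scal(r)\to\Scal_\infty<+\infty$; $0\le\Delta r\le n\lambda/f'\to0$ by Lemma~\ref{lmm_hessr} and by~\eqref{equa1} together with $\Delta f=f''+f'\Delta r$; and $\Ric(\nabla r,\nabla r)=\tfrac{1-2(n-1)\rho}{2}\,\Scal'/f'\to0$ (as established in the proof of Proposition~\ref{prp_fconvex}), whence $f''=-\Ric(\nabla r,\nabla r)+\lambda+\rho\Scal\to\lambda+\rho\Scal_\infty>0$. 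Since $\nabla\Scal$ and $\nabla f$ are parallel to $\nabla r$, equation~\eqref{equa2} moreover forces $\Ric(\nabla r,E)=0$ for every $E\perp\nabla r$; thus $\Ric$ is block diagonal and, writing $\Ric_{\Sigma_r}$ for its restriction to $T\Sigma_r$, one has $|\Ric|^2=\Ric(\nabla r,\nabla r)^2+|\Ric_{\Sigma_r}|^2$, while $\Ric\ge0$ gives $|\Ric_{\Sigma_r}|^2\le(\Scal-\Ric(\nabla r,\nabla r))^2$.

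The core of the proof is to show that $\Scal$ is $f$-subharmonic at infinity. Rewriting~\eqref{equa3} through the rectifiability as the ODE
$$
\big(1-2(n-1)\rho\big)\big(\Scal''+\Scal'\Delta r\big)\,=\,\Scal' f'\,+\,2\big(\rho\Scal^2-|\Ric|^2+\lambda\Scal\big),
$$
one computes $\Delta\Scal-\langle\nabla\Scal,\nabla f\rangle=\tfrac{2}{1-2(n-1)\rho}\big[(n-1)\rho\,\Scal' f'+\rho\Scal^2-|\Ric|^2+\lambda\Scal\big]$, and since $(n-1)\rho\,\Scal' f'\ge0$ on $\{r\ge r_0\}$, the statement reduces to proving that the reaction term $\rho\Scal^2-|\Ric|^2+\lambda\Scal$ is nonnegative for $r$ large. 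I expect this estimate to be the main obstacle of the whole argument: it is exactly here that the three curvature hypotheses --- $\Ric\ge0$, nonnegative radial sectional curvature, $0<\rho<1/2(n-1)$ --- have to be used jointly, to exclude that $|\Ric|^2$ stays too large compared with $\Scal$ near infinity. The natural ingredients are the bound $|\Ric|^2\le\Ric(\nabla r,\nabla r)^2+(\Scal-\Ric(\nabla r,\nabla r))^2$ with $\Ric(\nabla r,\nabla r)\to0$, the Riccati identity $(\Delta r)'=-|\nabla^2 r|^2-\Ric(\nabla r,\nabla r)$ --- which, $\Delta r$ being bounded and monotone, forces $|\nabla^2 r|^2\in L^1([r_0,+\infty))$ and controls the second fundamental form of the level sets --- and the soliton equation restricted to $T\Sigma_r$.

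Granting the $f$-subharmonicity, the conclusion is quick. From the displayed ODE and the nonnegativity of the reaction term one gets $\Scal''\ge\Scal'\big(\tfrac{f'}{1-2(n-1)\rho}-\Delta r\big)\ge0$ on $\{r\ge r_1\}$ for $r_1$ large enough (recall $f'\to+\infty$, $\Delta r\to0$, $\Scal'\ge0$), so $\Scal$ is convex there; being also nondecreasing and bounded on the half-line, $\Scal$ must be constant on $\{r\ge r_1\}$. (Equivalently, one may integrate $\operatorname{div}(e^{-f}\nabla\Scal)=e^{-f}\big(\Delta\Scal-\langle\nabla\Scal,\nabla f\rangle\big)\ge0$ over this end, the flux at infinity vanishing by the superexponential decay of $e^{-f}$.) Consequently $\Scal$ is constant on a nonempty open set, and Theorem~\ref{teo_analytic} yields $\Scal\equiv\mathrm{const}$ on $M$, as claimed.
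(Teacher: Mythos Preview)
Your overall strategy matches the paper's --- prove $\Delta_f\Scal\ge0$ on an end, deduce $\Scal$ is constant there, and conclude by analyticity --- and your computations and the concluding argument are correct. But there is a genuine gap at the step you yourself flag as ``the main obstacle'': you do not prove that the reaction term $\rho\Scal^2-|\Ric|^2+\lambda\Scal$ is nonnegative for large $r$, and the ingredients you list (Riccati identity, $L^1$-control on $|\nabla^2 r|^2$, block structure of $\Ric$, the crude bound $|\Ric_{\Sigma_r}|^2\le(\Scal-\Ric(\nabla r,\nabla r))^2$) do not, by themselves, yield this inequality --- the last bound in particular would give $|\Ric|^2\approx\Scal^2$, which is the wrong direction.

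The point you are missing is that Proposition~\ref{prp_fconvex} gives the \emph{full} Hessian bound $\nabla^2 f\ge0$ on $\{r\ge r_0\}$, not merely $f''\ge0$. Through the soliton equation~\eqref{eq_soliton} this is exactly the tensor inequality $\Ric\le(\lambda+\rho\Scal)g$. Now observe the algebraic identity
\[
\rho\Scal^2-|\Ric|^2+\lambda\Scal\;=\;\big\langle(\lambda+\rho\Scal)g-\Ric,\ \Ric\big\rangle\;=\;\big\langle\nabla^2 f,\ \Ric\big\rangle,
\]
which is the trace of the product of two positive semi-definite symmetric tensors ($\nabla^2 f\ge0$ by Proposition~\ref{prp_fconvex}, $\Ric\ge0$ by hypothesis), hence nonnegative. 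That one line replaces all the machinery you were contemplating, and this is precisely how the paper proceeds. With this in hand your remaining argument ($\Scal''\ge(f'-C)\Scal'\ge0$, forcing $\Scal'\equiv0$ on a half-line) coincides with the paper's.
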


\begin{proof} As in the proof of Lemma~\ref{lmm_hessr} and Proposition~\ref{prp_fconvex}, we only consider the case where $r$, the signed distance to $\Sigma_0$, is not bounded from above. By Proposition \ref{prp_fconvex}, we have that, for $r \geq r_0$, the Hessian of $f$ is positive semi-definite. Hence, by equation~\eqref{eq_soliton}, 
$$
\Ric\leq \lambda g +\rho \Scal g \,.
$$
Writing equation~\eqref{equa3} as 
$$
(1-2(n-1)\rho) \, \Delta \Scal \, = \, \langle \nabla \Scal\vert\nabla f\rangle -2(\RRR_{ij}-\rho\Scal g_{ij}-\lambda g_{ij})\RRR_{ij}
$$
and noticing that, if $r$ is large enough, the term $-2(\RRR_{ij}-\rho\Scal g_{ij}-\lambda g_{ij})\RRR_{ij}$ is the product of two nonnegative quantities, we arrive to
$$
(1-2(n-1)\rho)\, [ \, \Delta \Scal-\langle\nabla\Scal\vert\nabla f\rangle \, ] \, \geq  \, 2(n-1)\rho \, \langle \nabla\Scal\vert\nabla f\rangle \, \geq \, 0 \, ,
$$ 
for $r\geq r_0$. So far, we have obtained that $\RRR$ is $f$-subharmonic at infinity, in the sense that 
$$
\Delta_f \RRR \geq 0 \, ,
$$ 
for $r\geq r_0$. Using the rectifiability, this condition reads
$$
\RRR'' + \RRR' \Delta r -  \Scal' f' \, \geq \, 0 \, .
$$
As we noticed in the proof of the previous proposition, under our assumptions we have $|\Ric|^2 \leq \RRR^2$ and $\Delta r \leq C$, for some positive constant $C>0$.
Combining this with Lemma~\ref{lmm_grad_lin} and Proposition~\ref{prp_quadratic}, we deduce that there exists a real number $r_1>0$ such that 
$$
\RRR'' \, \geq \, [\,f' - C\, ] \, \RRR' \, \geq \, 0 \, ,
$$
for $r\geq r_1$. In particular, $\RRR'(r) \geq \RRR'(r_1) \geq 0$, for every $r\geq r_1$. Integrating $\RRR'$, we get
$$
\RRR(r) \, = \, \RRR(r_1) \, + \int_{r_1}^r \RRR'(s) \, ds \, \geq \, \RRR(r_1) \, +  \RRR'(r_1) \, (r-r_1) \, .
$$
Since $\RRR$ is bounded, the only possibility is that $\RRR'(r_1) = 0$. Replying this argument for every $r_2 \geq r_1$, we deduce that $\RRR$ is constant in the region $\{\, r \geq r_1\,\}$. By the analyticity of $\RRR$, see Theorem~\ref{teo_analytic}, we conclude that $\RRR$ must be constant everywhere.
\end{proof}
We observe now that the previous proposition combined with Theorem~\ref{teorect} implies that, for $0<\rho< 1/2(n-1)$, our $\rho$-Einstein soliton is actually a {rectifiable} gradient shrinking Ricci soliton satisfying all the assumptions in Theorem~\ref{petwyl} in~\cite{pw2}. Hence, it is rigid and the proof of Theorem~\ref{main} is complete.

\

\begin{ackn} The authors are partially supported by the Italian projects FIRB--IDEAS ``Analysis and Beyond'' and GNAMPA ``Flussi geometrici e solution autosimilari''.
\end{ackn}

\bibliographystyle{amsplain}
\bibliography{rigid}

\end{document}